\newcommand{\HdR}{H_{{\rm dR}}}
\newcommand{\Spec}{\text{Spec}}
\newcommand{\del}[1]{\frac{\partial}{\partial#1}}
\newtheorem{theorem}{Theorem}
\newtheorem{definition}{Definition}
\newtheorem{proposition}{Proposition}
\newtheorem{conjecture}{Conjecture}
\newtheorem{remark}{Remark}
\title{Gauss-Manin connection in disguise:\\ Open Gromov-Witten invariants}
\author[1]{Felipe Espreafico}
\affil[1]{{\it Institute of Pure and Applied Mathematics, Rio de Janeiro\/}\\{\tt felipe.espreafico@impa.br}}
\date{} % Activate to display a given date or no date (if empty),
\begin{document}

\maketitle
\begin{abstract}
In mirror symmetry, after the work by J. Walcher, the number of holomorphic disks with boundary on the real quintic lagrangian in a general quintic threefold is related to the periods of the mirror quintic family with boundary on two homologous rational curves, known as Deligne conics. Following the ideas of H. Movasati, we construct a quasi-affine space parametrizing such objects enhanced with a frame for the relative de Rham cohomology with boundary at the curves compatible with the mixed Hodge structure. We also compute a modular vector field attached to such a parametrization. 
\end{abstract}

\section{Introduction}

In the 1980s, physicists working on String Theory discovered a phenomenon called Mirror Symmetry: the idea is that the same physical theory can be described by two mathematically different models. Naively, one of these models (the A-model) is mainly related to the symplectic geometry of Calabi-Yau manifolds, while the other one (B-model) is related to the complex algebraic geometry of these spaces. Although many of the tools used by these physicists were not yet rigorous, this led to some impressing purely mathematical results. The most famous example is the quintic threefold: mirror symmetry associates, to a generic quintic threefold in $\mathbb P^4$, a family of manifolds known as mirror quintics. In the paper~\cite{CdlOGP}, the authors used these techniques to make predictions for the number of rational curves on a generic quintic threefold in $\mathbb P^4$. They assumed that the {\it correlation functions\/} from the A-model and the B-model were the same. The first was associated to Gromov-Witten invariants and a prediction to the number of rational curves on a general quintic, and the second was computed explicitly. This correlation function is usually called Yukawa coupling.

In the B-model, the Yukawa coupling is related to a generalization of Kaneko and Zagier's theory of quasi-modular forms developed by Movasati in~\cite{MovArticleMQ}. Movasati's approach relies on the algebraic de Rham cohomology (see~\cite{Grothendieck}) and on the Gauss-Manin connection (see~\cite{katz-oda}). The idea is to consider the moduli space $\sf T_{\rm cl}$ of pairs $(X,[\alpha_1,\dots,\alpha_4])$, where $X$ is a mirror quintic and $[\alpha_1,\dots,\alpha_4]$ is a basis to the third de Rham cohomology of $X$ that, in some sense, respects the Hodge filtration. After computing the Gauss-Manin connection matrix, Movasati was able to define a differential algebra attached to $\sf T_{\rm cl}$ with elements that behave similarly to modular forms: he found relations, between the generators and their derivatives, which generalize the Ramanujan relations we have for the Eisenstein series. The procedure described above is part of a more general program known as Gauss-Manin connection in disguise (GMCD, for short). It is an attempt to construct a general theory of modular forms via functions on a moduli space submitted to a differential equation. Some cases have already been treated, from the elliptic curve case in~\cite{mov-elliptic} to more general cases as in~\cite{AMSY} and the more recent paper~\cite{alimkurylenkovogrin}.

In this text, we will focus solely on the B-side of mirror symmetry and consider another enumerative result: the computation of the numbers of disks with boundary on the real quintic Lagrangian inside the quintic threefold. These numbers were first predicted in~\cite{walcher-opening} and then fully computed in~\cite{pand-wal-sol}. This computation relies on the open Gromov-Witten invariants, which are also computed in these articles. Instead of just considering the mirror family, we need to fix, in each one of the elements of the family, a pair of homologous rational curves $C_{\pm}$ called Deligne conics. We refer the reader to Section~\ref{prelim} for details and to~\cite{mor-wal} for the reasons to consider these two curves. In this context, we have a new natural period to compute, which is the integral of a holomorphic three form over the homology connecting the two curves above. This period satisfies a non homogenous version of the Picard-Fuchs equation, which can be found on Section~\ref{gauss-manin}. 

In order to execute the ideas from the GMCD program, we consider a relative version of the algebraic de Rham cohomology $\HdR^3(X,C_+\cup C_-)$ and a relative version of the Gauss-Manin connection. We define a moduli space $\sf T_{op}$ of triples $(X,C_{\pm}, [\alpha_0,\dots,\alpha_4])$, where $X$ is an element of the mirror family, $C_{\pm}$ is the pair of curves described above on each element of the family and $[\alpha_0,\dots,\alpha_4]$ is a basis of the third relative algebraic de Rham cohomology which respects the {\bf mixed} Hodge structure. For details on mixed Hodge structures, we refer to~\cite{peters-steenbrink} and Section~\ref{MHS and GM} of this text. In the case of the elliptic curve, the Hodge filtration consists of $F^0\supset F^1\supset \dots \supset F^4 =0$ with $\dim F^0 = 5$, $\dim F^1=4$, $\dim F^2 = 2$, $\dim F^3 = 1$ and the weight filtration consists of $0 = W_0\subset W_1\subset \dots W_3$, with $\dim W_0 = \dim W_1 = 0$, $\dim W_2 = 1$ and $\dim W_3 = 5$. These dimensions are computed in Section~\ref{MHS and GM}. The idea of considering mixed Hodge structures and relative cohomology in the GMCD framework was examined in the case of elliptic curves with two fixed points in the paper~\cite{MVC}, in which the authors recovered the theory of Jacobi forms of index zero, and in the paper~\cite{alimkurylenkovogrin}, where the authors considered affine Calabi-Yau varieties.

\begin{definition}\label{relat-enha-mq}
    A relatively enhanced mirror quintic is simply a triple \[(X,C_{\pm},[\alpha_0,\dots,\alpha_4]),\] where $X$ is a mirror quintic, $C_{\pm}$ is the pair of homologous curves cited above and specified in~\eqref{curves} and $[\alpha_0,\dots,\alpha_4]$ is a basis of $H^3_{dR}(X,C_+\cup C_-)$ satisfying the following properties. Let $\delta_0$ be any homology connecting the two curves. Then the properties read:
    \begin{multicols}{2}
\begin{enumerate}[{{i)}}]
\item $\alpha_i\in F^{4-i}\setminus F^{5-i},\quad i >0$;

\item $[\langle\alpha_i,\alpha_j\rangle] =\Phi$;

\item $\alpha_0\in F^1\setminus F^2$;

\item $\alpha_0\in W_2$;

\item $\int_{\delta_0} \alpha_0 = 1$;

\item $\alpha_i\in W_3\setminus W_2,\quad i>0$.
\end{enumerate}
\end{multicols}
where $\Phi=\left[\begin{array}{c
cccc}
0 & 0 & 0 &0 &0\\
0&0 & 0 & 0 & 1 \\
0&0 & 0 & 1 & 0 \\
0&0 & -1 & 0 & 0 \\
0&-1 & 0 & 0 & 0
\end{array}\right]$.
\end{definition}

Although condition $(v)$ above seems to depend on the choice of $\delta_0$, it is actually an algebraic condition which is not influenced by this choice. Indeed, $\alpha_0$ ends up being a class on the image of the map $\HdR^2(C_+\cup C_-)\to \HdR^3(W,C_+\cup C_-)$ (see Section~\ref{MHS and GM}) and, therefore, its integral over any homology class depends only on the boundary of the homology class. 

\begin{theorem}
\label{moduliS} Relatively enhanced mirror quintics can be parametrized by the nine coordinates in affine space given by
\begin{equation}
{\sf T_{op}}:=\Spec\left(\mathbb C\left[s_0,s_1,s_2,s_3,s_4,s_5,s_6,s_7,s_8,\frac1{s_5(s_0^{10}-s_4^{10})s_0s_4}\right]\right).
\end{equation}
\end{theorem}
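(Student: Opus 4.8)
The plan is to follow the strategy behind Movasati's construction of $\mathsf T_{\mathrm{cl}}$ in \cite{MovArticleMQ} and behind \cite{AMSY}: over each member of the mirror family identify the algebraic group of frame changes preserving conditions (i)--(vi), realize the moduli of relatively enhanced mirror quintics as the total space of the associated bundle, and then read off explicit affine coordinates on it. First I would fix the explicit model --- the mirror quintic as a crepant resolution of the quotient of the Dwork pencil $x_0^5+\cdots+x_4^5-5\psi\,x_0\cdots x_4=0$, the holomorphic three-form $\omega_\psi$ obtained by residue, the equations \eqref{curves} for the Deligne conics $C_\pm$, and the connecting map $\HdR^2(C_+\cup C_-)\to\HdR^3(X,C_+\cup C_-)$ whose one-dimensional image is the weight piece $W_2$. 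Using the relative Gauss--Manin connection and the inhomogeneous Picard--Fuchs equation of Section \ref{gauss-manin}, I would then write down an explicit \emph{algebraic} frame $\theta_0,\dots,\theta_4$ of $\HdR^3(X,C_+\cup C_-)$ adapted to both filtrations: $\theta_1=\omega_\psi$ together with its iterated Gauss--Manin derivatives spanning the steps of $F^\bullet$ on the absolute part $\HdR^3(X)$, and a distinguished class $\theta_0$ spanning $W_2\cap F^1$ built from $C_\pm$. One also records the constant Gram matrix of $\langle\cdot,\cdot\rangle$ in this frame and the value $\int_{\delta_0}\theta_0$, the latter being an algebraic function of $\psi$ precisely because $\theta_0$ lies in the image of $\HdR^2(C_+\cup C_-)$, as remarked after Definition \ref{relat-enha-mq}.

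The group-theoretic heart of the argument is this. Conditions (i), (iii), (iv), (vi) say exactly that an admissible frame refines the canonical complete flag $0\subset F^3\subset F^2\subset F^2+W_2\subset F^1\subset F^0$, with $\alpha_0$ spanning the radical $W_2$ of the pairing; condition (ii) prescribes the Gram matrix, and (v) rigidifies $\alpha_0$. A direct computation then shows that the frame changes preserving all of this form an $8$-dimensional algebraic group $G$: modulo $W_2$ it acts on $\mathrm{gr}^W_3\cong\HdR^3(X)$ through a Borel subgroup of $\mathrm{Sp}_4$ (six parameters, preserving the induced complete flag and the symplectic form), and the kernel of $G\to B(\mathrm{Sp}_4)$ is the two-dimensional unipotent group of shears $\alpha_3\mapsto\alpha_3+a\alpha_0$, $\alpha_4\mapsto\alpha_4+b\alpha_0$. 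Since the algebraic frame $\theta_\bullet$ may itself be taken admissible, the admissible frames over a fixed $(X_\psi,C_\pm)$ form a $G$-torsor; hence the moduli of relatively enhanced mirror quintics is a $G$-bundle over the one-dimensional base, of total dimension $1+8=9$.

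To get the stated presentation I would write a general admissible frame as $\alpha_i=\sum_j S_{ij}\,\theta_j$ with $S=S(\psi,\mathbf g)$ and $\mathbf g\in G$, and choose nine entries of $S$ --- equivalently, nine quantities that jointly determine $\psi$ and $\mathbf g$ rationally --- as the coordinates $s_0,\dots,s_8$. It then remains to check that they are algebraically independent, that they generate the coordinate ring of the bundle, and that the locus one must invert is exactly $s_5(s_0^{10}-s_4^{10})s_0s_4$, whose three factors record respectively the invertibility of $S$, the smoothness of $X_\psi$ (the discriminant of the Dwork pencil), and the existence and distinctness of the two Deligne conics of \eqref{curves}. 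Together these identify the moduli space with $\Spec\big(\mathbb C[s_0,\dots,s_8,\tfrac1{s_5(s_0^{10}-s_4^{10})s_0s_4}]\big)$.

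The step I expect to be the main obstacle is the explicit, filtration-compatible control of the \emph{weight} part: describing the connecting map $\HdR^2(C_+\cup C_-)\to\HdR^3(X,C_+\cup C_-)$ and the class $\alpha_0$ concretely enough to verify (iv) algebraically and to evaluate $\int_{\delta_0}\theta_0$ as an algebraic function of $\psi$, this being the genuinely new ingredient beyond the classical enhanced mirror quintic. A secondary technical point is confirming that the chosen nine entries of $S$ generate the whole coordinate ring and that no factors beyond the three exhibited enter the localization.
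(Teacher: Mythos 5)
Your proposal is correct and follows essentially the same route as the paper: the paper also starts from the frame $\omega_0,\omega_1,\dots,\omega_4$ built from the residue form, its Gauss--Manin derivatives, and the image of $\HdR^2(C_+\cup C_-)$, writes a general filtration-adapted change-of-basis matrix $S$ over the base $(s_0,s_4)$, and uses condition (ii) to eliminate the redundant entries $a,b,c,d$, leaving exactly the nine coordinates and the localization at $s_5(s_0^{10}-s_4^{10})s_0s_4$. Your torsor/dimension-count packaging ($1+8=9$, Borel of $\mathrm{Sp}_4$ plus two shears by $\alpha_0$) is just a more structural phrasing of the same computation, and the $8$-dimensional group you describe is precisely the group $\sf G$ the paper introduces in Section \ref{periods}.
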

For an explicit description of the parametrization above, see the proof of Theorem~\ref{moduliS} in Section~\ref{proof1}. With this moduli space in hands, it is possible to compute a differential equation relating these generators $s_i$  with open Gromov-Witten invariants and the virtual count of disks.

\begin{theorem}
\label{Ramanujan}
Consider the space $\sf T_{op}$ defined above. Let $\sf A$ be the Gauss-Manin connection matrix in the basis $\alpha$. There is a unique vector field $\sf R$, for which the connection composed with it is given, in the basis $\alpha$, by
$$
\sf A_{\sf R} = \begin{bmatrix}
0&0&0&0&0\\
0 &0 & 1 & 0 & 0 \\
\sf F&0 & 0 & \sf Y & 0 \\
0&0 & 0 & 0 & -1 \\
0&0 & 0 & 0 & 0
\end{bmatrix},
$$
for regular functions $\sf Y$ and $\sf F$ in $\sf T_{op}$. The expression of $\sf R$, $\sf F$ and $\sf Y$ in the coordinates from Theorem~\ref{moduliS} are
\begin{equation}
\label{Yukawa}
{\sf Y} = \frac{5^{8}\left(s_{4}^{10}-s_{0}^{10}\right)^{2}}{s_{5}^{3}},\quad {\sf F} =- s_{7}{\sf Y},
\end{equation}
\begin{equation}
\label{Ramanuj}
\sf R:\left\{\begin{aligned}
\dot{s}_{0} &=\frac{1}{2s_0s_5}\left(6 \cdot 5^{4} s_{0}^{10}+s_{0}^2 s_3-5^{4} s_4^{10}\right) \\
\dot{s}_{1} &=\frac{1}{s_5}\left(-5^{8} s_{0}^{12}+5^{5} s_{0}^{8} s_1+5^{8} s_{0}^2 s_4^{10}+s_1 s_3\right) \\
\dot{s}_{2} &=\frac{1}{s_5}\left(-3 \cdot 5^{9} s_{0}^{14}-5^{4} s_{0}^{10} s_1+2 \cdot 5^{5} s_{0}^{8} s_2+3 \cdot 5^{9} s_{0}^{4} s_4^{10}+5^{4} s_1 s_4^{10}+2 s_2 s_3\right) \\
\dot{s}_{3} &=\frac{1}{s_5}\left(-5^{10} s_{0}^{16}-5^{4} s_{0}^{10} s_2+3 \cdot 5^{5} s_{0}^{8} s_3+5^{10} s_{0}^{3} s_4^{10}+5^{4} s_2 s_4^{10}+3 s_3^{2}\right) \\
\dot{s}_{4} &=\frac{1}{10s_5}\left(5^{6} s_{0}^{8} s_4+5 s_3 s_4\right) \\
\dot{s}_{5} &=\frac{1}{s_5}\left(-5^{4} s_{0}^{10} s_6+3 \cdot 5^{5} s_{0}^{8} s_5+2 s_3 s_5+5^{4} s_4^{10} s_6\right) \\
\dot{s}_{6} &=\frac{1}{s_5}\left(3 \cdot 5^{5} s_{0}^{8} s_6-5^{5} s_{0}^{6} s_5-2 s_2 s_5+3 s_3 s_6\right)\\
\dot{s}_{7} & = -s_{8}\\
\dot{s}_{8} & = -\frac{5^{12}\left(s_0^{10} - s_4^{10}\right)}{s_5}\cdot \frac{15}8 \left(\frac{s_4}{s_0}\right)^5\frac1{25\sqrt5}\\
\end{aligned}\right. 
\end{equation}
\end{theorem}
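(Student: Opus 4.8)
The plan is to start from the full Gauss--Manin matrix ${\sf A}$ on $\sf T_{op}$ in the frame $\alpha=(\alpha_{0},\dots,\alpha_{4})^{t}$ --- a $5\times5$ matrix of regular $1$-forms, produced in Section~\ref{gauss-manin} by Griffiths--Dwork reduction in the relative cohomology $\HdR^{3}(W,C_{+}\cup C_{-})$ in the coordinates of Theorem~\ref{moduliS} --- and to solve for ${\sf R}$ from the single matrix equation $\iota_{\sf R}{\sf A}={\sf A}_{\sf R}$, the functions ${\sf F},{\sf Y}$ being read off afterwards. First I would record the structural constraints on ${\sf A}$ that turn this into an essentially triangular, uniquely solvable system. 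Since $\dim F^{3}=1$, $\dim F^{2}=2$, $\dim F^{1}=4$, $\dim F^{0}=5$ and $\dim W_{2}=1$, conditions (i), (iii), (iv) of Definition~\ref{relat-enha-mq} fix the flag $F^{3}=\langle\alpha_{1}\rangle\subset F^{2}=\langle\alpha_{1},\alpha_{2}\rangle\subset F^{1}=\langle\alpha_{0},\alpha_{1},\alpha_{2},\alpha_{3}\rangle\subset F^{0}=\langle\alpha_{0},\dots,\alpha_{4}\rangle$, so Griffiths transversality $\nabla F^{p}\subseteq\Omega^{1}\otimes F^{p-1}$ forces many entries of ${\sf A}$ to vanish, in particular $\nabla\alpha_{1}\in\Omega^{1}\otimes\langle\alpha_{1},\alpha_{2}\rangle$; flatness of the intersection pairing gives ${\sf A}\Phi+\Phi{\sf A}^{t}=0$ with $\Phi$ the matrix of Definition~\ref{relat-enha-mq}(ii); and $W_{2}=\langle\alpha_{0}\rangle$ is the image of the constant local system $\HdR^{2}(C_{+}\cup C_{-})$ under the horizontal connecting map of the pair, with $\alpha_{0}$ its distinguished section normalized by $\int_{\delta_{0}}\alpha_{0}\equiv1$ and $\delta_{0}$ flat, so $\alpha_{0}$ is itself a flat section, $\nabla\alpha_{0}=0$, and the entire first row of ${\sf A}$ vanishes. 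In particular the first row of ${\sf A}_{\sf v}$ is automatically zero for every vector field ${\sf v}$, which already accounts for the first row of the target matrix.

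Next I would split the problem into the classical part and the two new directions. Passing to the quotient by $W_{2}$ recovers the mirror quintic family, with $\bar\alpha=(\alpha_{1},\dots,\alpha_{4})\bmod W_{2}$ a classically enhanced frame in the sense of \cite{MovArticleMQ} --- conditions (i), (ii), (vi) of Definition~\ref{relat-enha-mq} descend --- and induced connection the ordinary Gauss--Manin connection of the mirror family. By \cite{MovArticleMQ} (see also \cite{AMSY}) there is then a unique modular vector field on that classical moduli whose associated connection form is the $4\times4$ block obtained from ${\sf A}_{\sf R}$ by deleting its first row and column, with Yukawa coupling ${\sf Y}$ equal to \eqref{Yukawa}. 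Because the sub-block of ${\sf A}$ indexed by $\alpha_{1},\dots,\alpha_{4}$ is the pullback of that classical matrix --- it involves neither $ds_{7},ds_{8}$ nor the variables $s_{7},s_{8}$, since $\alpha_{1},\alpha_{2}$ are unshifted classical lifts and the $\alpha_{0}$-shifts carried by $\alpha_{3},\alpha_{4}$ only affect $\alpha_{0}$-components --- requiring this block of ${\sf A}_{\sf R}$ to have the stated shape determines the components $\dot s_{0},\dots,\dot s_{6}$ of ${\sf R}$ uniquely. The only entries of ${\sf A}_{\sf R}$ still unaccounted for are the $\langle\alpha_{0}\rangle$-components of $\nabla_{\sf R}\alpha_{2},\nabla_{\sf R}\alpha_{3},\nabla_{\sf R}\alpha_{4}$: the first defines ${\sf F}$, while --- using that in the parametrization of Theorem~\ref{moduliS} the coordinates $s_{7},s_{8}$ record precisely the $\langle\alpha_{0}\rangle$-components of $\alpha_{3},\alpha_{4}$ relative to a classical lift, so $\partial_{s_{7}}\alpha_{3}=\partial_{s_{8}}\alpha_{4}=\alpha_{0}$ and $\nabla\alpha_{0}=0$ --- the two remaining conditions $({\sf A}_{\sf R})_{30}=({\sf A}_{\sf R})_{40}=0$ become two scalar equations pinning down $\dot s_{7}$ and $\dot s_{8}$. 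Substituting the explicit classical Gauss--Manin matrix then gives $\dot s_{7}=-s_{8}$, the explicit value of $\dot s_{8}$, and the relation ${\sf F}=-s_{7}{\sf Y}$, so that \eqref{Yukawa} and \eqref{Ramanuj} follow; uniqueness is immediate, the classical modular vector field being unique and the two remaining equations uniquely solvable.

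Finally I would verify regularity: feeding the parametrization of Theorem~\ref{moduliS} into the formulas just obtained, the $\dot s_{i}$ and ${\sf F},{\sf Y}$ acquire only the denominators $s_{5}$, $s_{0}^{10}-s_{4}^{10}$, $s_{0}$, $s_{4}$, all already inverted in $\mathcal{O}({\sf T_{op}})$; hence ${\sf R}$ is a genuine vector field on $\sf T_{op}$ and ${\sf F},{\sf Y}$ are regular functions, with the asserted expressions. This is a direct, if lengthy, calculation.

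I expect the main obstacle to lie upstream of this argument and to be purely computational: producing the explicit Gauss--Manin matrix ${\sf A}$ in the coordinates $s_{0},\dots,s_{8}$, which means running the Griffiths--Dwork reduction in relative cohomology along the mirror family together with the two Deligne conics and, in particular, nailing down the genuinely new pieces that produce $\dot s_{7},\dot s_{8}$ and the inhomogeneous term ${\sf F}$ --- whose constants $\tfrac{15}{8}$ and $\tfrac{1}{25\sqrt5}$ encode the virtual disk count of \cite{walcher-opening,pand-wal-sol,mor-wal}. Once ${\sf A}$ is known, solving for ${\sf R}$ is a structured linear-algebra computation, best handled by a computer algebra system, and the sole conceptual point --- uniqueness --- reduces cleanly to the classical mirror-quintic case and the flatness of $\alpha_{0}$, needing only that ${\sf Y}\not\equiv0$ (clear from \eqref{Yukawa}) and that $\Phi$ restricts nondegenerately to the weight-$3$ graded piece.
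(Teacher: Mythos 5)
Your proposal is correct and follows essentially the same route as the paper: the $4\times4$ block of $\sf A$ in the frame $\alpha_1,\dots,\alpha_4$ coincides with the classical mirror-quintic connection of \cite{MovArticleMQ}, so $\dot s_0,\dots,\dot s_6$ and $\sf Y$ are imported from there, while the vanishing of the $(4,1)$ and $(5,1)$ entries of ${\sf A}_{\sf R}$ (the $\alpha_0$-components of $\nabla_{\sf R}\alpha_3,\nabla_{\sf R}\alpha_4$, using $\partial_{s_7}\alpha_3=\partial_{s_8}\alpha_4=\alpha_0$ and $\nabla\alpha_0=0$) forces $\dot s_7=-s_8$ and the stated $\dot s_8$, with $\sf F$ read off from the $(3,1)$ entry. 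The only cosmetic difference is that the paper assembles $\sf A$ from Walcher's inhomogeneous Picard--Fuchs equation rather than a fresh Griffiths--Dwork reduction, and leaves the structural vanishings you justify via transversality and flatness of $\alpha_0$ to direct computation.
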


The proof of Theorem~\ref{Ramanujan} is computational. The interesting part about it is that, if we consider $s_i$ functions on a variable $q$, take the derivation to be $5q\frac{d}{dq}$, fix initial values $s_{0,0}:=\frac1{\sqrt 5}$, $s_{0,1}:=12\sqrt5$ and $s_{0,4}:=0$ and allow fractional exponents, we get
$$
-5^3{\sf Y}=5 + 2875q + 4876875q^2 + 8564575000q^3 + \dots = \sum_{d=0}^{\infty}n_dd^3\frac{q^d}{1-q^d}
$$
$$
\frac{4}{5^3}{\sf F}(q):=30q^{1/2} + 13800q^{3/2} + 27206280q^{5/2} + 47823842250q^{7/2} + ... = 
$$
$$
 = \sum_{d\, {\rm odd}} n_d^{\rm disk}d^2\frac{q^{d/2}}{1-q^{d}},
$$
	where $n_d$ are the virtual counts of rational curves of degree $d$ on a generic quintic threefold (see~\cite{CdlOGP} and~\cite{MovArticleMQ}) and $n_d^{\rm disk}$ are the virtual counts of disks with boundary on a Lagrangian of a quintic threefold (see~\cite{walcher-opening} and~\cite{pand-wal-sol}). The q-expansions for all functions $s_i$ can be found on the author's webpage\footnote{\url{www.impa.br/~felipe.espreafico/expansionsi}}. Notice that $s_1,s_2,s_3, s_5$ and $s_6$ are the same as the corresponding $t_i$ from~\cite[Theorem 3]{MovArticleMQ} and that $s_0^2 = t_0$ and $s_4^{10} = t_4$.

Our paper is divided in four parts. Section~\ref{prelim} has the aim of presenting basic definitions and properties of the objects and concepts we are using. In Section~\ref{vecfield} we present the proofs of the two main results stated in the introduction. In Section~\ref{periods} we make some computations using the period matrix  and the generalized period domain to explain, for example, why the disk counts are appearing in Theorem~\ref{Ramanujan}. Moreover, these computations help us understand why these functions behave as modular forms.

\section{Preliminaries} \label{prelim}
To start, we present some of the necessary concepts to prove our theorems and fix the notation we are going to use in the rest of the paper.

\subsection{The case of Elliptic Curves}

Before start elaborating on the main topic of the paper, which is to construct a modular framework for open string invariants for the quintic, we recall how a geometric framework is construct for the classical quasi-modular forms. This is an attempt to motivate the constructions in the present text. We refer the interested reader to~\cite{mov-elliptic}, where this framework was first developed, for more details and proofs.

Recall that the algebra of of quasi-modular forms is $\mathbb C[E_2,E_4,E_6]$, where $E_k$ are the Eisenstein series. They satisfy a system of differential equations known as Ramanujan equations. Our idea is to interpret these equations as a vector field on a suitable moduli space of elliptic curves. To do this, we have to deal with elliptic curves and elliptic integrals. In order to deal with both at the same time we will consider enhanced elliptic curves. 

 \begin{definition} A triple $(E, \alpha, \omega)$, where $\alpha$ is a holomorphic 1-form (first piece of the Hodge filtration) and $\omega$ is not holomorphic such that $\langle\alpha,\omega\rangle=1$ is called enhanced elliptic curve.
\end{definition}
 Here, $\langle,\rangle$ is the usual intersection product on the algebraic de Rham cohomology. The definition above allow us to consider, the integrals of $\alpha$ and $\omega$ over paths in $E$, that is, to study elliptic integrals.

\begin{proposition}[\cite{mov-elliptic}, Prop 5.4] The moduli space of enhanced elliptic curves is given by 
\begin{equation}
T= \{(t_1,t_2,t_3)\in\mathbb C^{3}\ |\ 27t_3^2 - t_2^3 \ne0 \},
\end{equation}
where the $(t_1,t_2,t_3)$ corresponds to the triple
$$
E: y^2 = 4(x-t_1)^3 + t_2(x-t_1) +t_3\qquad \alpha = \frac{dx}y\qquad \omega = x\frac{dx}y.
$$
\end{proposition}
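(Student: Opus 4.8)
The plan is to attach to every enhanced elliptic curve $(E,\alpha,\omega)$ a Weierstrass model of the shape in the statement, in which $\alpha$ and $\omega$ become $\frac{dx}{y}$ and $\frac{x\,dx}{y}$, and then to check that this assignment induces a bijection between isomorphism classes of enhanced elliptic curves and the points of $T$.

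First I would rigidify the pair $(E,\alpha)$, where $E$ carries its identity $O$. Since $\alpha$ spans the one-dimensional space $F^1\HdR^1(E)$ of holomorphic $1$-forms, its period lattice $\Lambda=\left\{\int_\gamma\alpha:\gamma\in H_1(E,\mathbb Z)\right\}\subset\mathbb C$ is well defined, and the Abel--Jacobi map gives an isomorphism $E\cong\mathbb C/\Lambda$ carrying $\alpha$ to $dz$; this lattice is uniquely determined by $(E,\alpha)$. Taking $x_0=\wp_\Lambda$ and $y=\wp_\Lambda'$ embeds $E$ as $y^2=4x_0^3-g_2(\Lambda)x_0-g_3(\Lambda)$ with $\frac{dx_0}{y}=\alpha$ (alternatively one could produce this model purely algebraically, via Riemann--Roch on the divisors $nO$ and normalization of the resulting cubic), and one sets $t_2:=-g_2(\Lambda)$ and $t_3:=-g_3(\Lambda)$; smoothness of $E$ is the nonvanishing of the discriminant of this cubic, which in these coordinates is exactly the condition $27t_3^2-t_2^3\neq 0$ defining $T$. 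The point is that, once one insists that $y=dx_0/\alpha$ and that the leading coefficient of the cubic be $4$, the only remaining freedom in the Weierstrass coordinates is the translation $x_0\mapsto x_0+t_1$.

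Next I would use $\omega$ to pin down $t_1$. The classes $[\alpha]$ and $\left[\frac{x_0\,dx_0}{y}\right]$ form a basis of $\HdR^1(E)$, and a residue computation at the point at infinity gives $\left\langle\alpha,\frac{x_0\,dx_0}{y}\right\rangle=1$; since this value is unchanged under translations and rescalings of $x_0$, it is the same universal constant for every model, and in particular it certifies that $\left[\frac{x_0\,dx_0}{y}\right]\notin F^1$. Writing the given class as $[\omega]=a\left[\frac{x_0\,dx_0}{y}\right]+b[\alpha]$, the hypotheses $\omega\notin F^1$ and $\langle\alpha,\omega\rangle=1$ force $a=1$; one then sets $t_1:=b$. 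With $x:=x_0+t_1$ the curve becomes $y^2=4(x-t_1)^3+t_2(x-t_1)+t_3$, and since $\frac{x\,dx}{y}=\frac{x_0\,dx_0}{y}+t_1\alpha$ we get $\omega=\frac{x\,dx}{y}$ as a de Rham class; thus $(E,\alpha,\omega)$ is isomorphic to the triple assigned to $(t_1,t_2,t_3)\in T$. Conversely, for every $(t_1,t_2,t_3)\in T$ the triple displayed in the statement is a genuine enhanced elliptic curve: the inequality is the discriminant condition, $\frac{dx}{y}$ is holomorphic and nowhere vanishing, $\frac{x\,dx}{y}$ has a pole at infinity and so is not holomorphic, and their pairing is $1$ by the same residue computation.

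It remains to verify that $(t_1,t_2,t_3)$ is a complete invariant, so that the correspondence is a bijection with $T$ and not merely a surjection. If $\phi\colon(E,\alpha,\omega)\to(E',\alpha',\omega')$ is an isomorphism of enhanced elliptic curves, then $\phi^*\alpha'=\alpha$ forces the period lattices to agree, whence $(t_2,t_3)=(t_2',t_3')$, and it also forces $\phi$ to be a translation of the torus $\mathbb C/\Lambda$; translations act as the identity on $\HdR^1$, so $[\omega]=\phi^*[\omega']$ must equal $\left[\frac{x_0\,dx_0}{y}\right]+t_1'[\alpha]$, which gives $t_1=t_1'$. Hence isomorphic triples carry the same coordinates and are in fact literally equal, which together with the previous paragraph yields the claimed description of the moduli space. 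The main obstacle will be the bookkeeping matching the residual coordinate changes of a Weierstrass equation against the three normalizations imposed --- leading coefficient $4$, $\alpha=\frac{dx}{y}$, and $\omega=\frac{x\,dx}{y}$ --- and, within that, the verification that $\left\langle\frac{dx}{y},\frac{x\,dx}{y}\right\rangle$ equals $1$ identically, which is precisely what makes the normalization $\langle\alpha,\omega\rangle=1$ compatible with requiring $\omega$ to have exactly that form.
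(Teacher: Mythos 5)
Your argument is correct and is essentially the standard proof of this proposition (which the paper only quotes from \cite{mov-elliptic} without reproving): normalize a Weierstrass model so that $\alpha=\frac{dx}{y}$, observe that the residual freedom is the translation $x\mapsto x+t_1$, and use $\langle\alpha,\frac{x\,dx}{y}\rangle=1$ together with $\langle\alpha,\omega\rangle=1$ to pin down $t_1$ as the coefficient of $\alpha$ in $\omega$. The only blemish is the discriminant bookkeeping: with $t_2=-g_2$, $t_3=-g_3$ and the displayed equation $y^2=4(x-t_1)^3+t_2(x-t_1)+t_3$ the smoothness condition comes out as $t_2^3+27t_3^2\neq0$ rather than $27t_3^2-t_2^3\neq0$, but this traces back to a sign typo in the statement itself (the cited source has $-t_2(x-t_1)-t_3$, for which the stated condition is the correct discriminant), so your normalization should simply be adjusted to match.
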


We now have a universal family $X\to T$ of enhanced elliptic curves and a basis of sections of the de Rham cohomology bundle. If we compute the Gauss-Manin connection in this basis $(\alpha,\omega)$, we get an explicit matrix in terms of the differentials $dt_i$. 
\begin{proposition}[\cite{mov-elliptic}, Prop. 4.1]
Let $R$ be a vector field in $T$ such that $\nabla_R \alpha = -\omega$ and $\nabla_R\omega = 0$. Then $R$ is unique and it is given by
\begin{equation}
\mathrm{R}=\left(t_{1}^{2}-\frac{1}{12} t_{2}\right) \frac{\partial}{\partial t_{1}}+\left(4 t_{1} t_{2}-6 t_{3}\right) \frac{\partial}{\partial t_{2}}+\left(6 t_{1} t_{3}-\frac{1}{3} t_{2}^{2}\right) \frac{\partial}{\partial t_{3}}
\end{equation}

\end{proposition}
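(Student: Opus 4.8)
The plan is to compute the Gauss--Manin connection of the universal family $X\to T$ in the explicit frame $(\alpha,\omega)$ of $H^1_{\mathrm{dR}}(X/T)$, and then to recover $R$ as the unique solution of a linear system over the function field of $T$.

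First I would write down the connection matrix. Set $f(x)=4(x-t_1)^3+t_2(x-t_1)+t_3$, so that the universal curve is $E_t\colon y^2=f(x)$ and $\alpha=\frac{dx}{y}$, $\omega=\frac{x\,dx}{y}$ form an $\mathcal{O}_T$-basis of the rank-two bundle $H^1_{\mathrm{dR}}(X/T)$. For each $i\in\{1,2,3\}$ I would take the naive derivative of these rational forms, using $\partial_{t_i}(1/y)=-\frac{1}{2y^3}\,\partial_{t_i}f$, and then reduce modulo exact rational forms by the usual Griffiths--Dwork / integration-by-parts procedure: one repeatedly uses the identities coming from $d\!\left(\frac{(x-t_1)^k}{y}\right)$ together with $f'(x)=12(x-t_1)^2+t_2$ and the relation $(x-t_1)f'(x)=3f(x)-2t_2(x-t_1)-3t_3$ to lower the pole order along $y=0$ and express every class as a $\mathbb{C}(t_1,t_2,t_3)$-combination of $[\alpha]$ and $[\omega]$. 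This produces an explicit matrix $A=A_1\,dt_1+A_2\,dt_2+A_3\,dt_3$ with $\nabla\binom{\alpha}{\omega}=A\binom{\alpha}{\omega}$, with denominators that are powers of the discriminant $27t_3^2-t_2^3$; as a consistency check, since $\nabla$ is compatible with the intersection pairing and $\langle\alpha,\omega\rangle=1$, $\langle\alpha,\alpha\rangle=\langle\omega,\omega\rangle=0$, the matrix $A$ is trace-free.

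Next I would impose the two defining conditions on $R$. Writing $R=r_1\partial_{t_1}+r_2\partial_{t_2}+r_3\partial_{t_3}$, the requirements $\nabla_R\alpha=-\omega$ and $\nabla_R\omega=0$ amount to the single matrix identity
\[
r_1A_1+r_2A_2+r_3A_3=\begin{pmatrix}0&-1\\0&0\end{pmatrix}.
\]
Both sides being trace-free, this is a system of three $\mathbb{C}[t_1,t_2,t_3]$-linear equations in the three unknowns $r_1,r_2,r_3$; the determinant of its coefficient matrix is a nonzero multiple of a power of the discriminant $27t_3^2-t_2^3$, hence a unit on $T$. Therefore the system has a unique solution, which gives the uniqueness of $R$, and solving it explicitly --- equivalently, substituting the claimed $R$ into the connection matrix and checking the identity --- yields
\[
R=\Big(t_1^2-\tfrac1{12}t_2\Big)\frac{\partial}{\partial t_1}+\big(4t_1t_2-6t_3\big)\frac{\partial}{\partial t_2}+\Big(6t_1t_3-\tfrac13t_2^2\Big)\frac{\partial}{\partial t_3}.
\]

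The main obstacle is the middle step, namely carrying out the pole-order reduction for the Gauss--Manin connection correctly; in particular one must be careful that $\omega=\frac{x\,dx}{y}$ mixes the translation coordinate $x-t_1$ with $t_1$, so that $\partial_{t_1}$ acts nontrivially on $\omega$ even though it would annihilate $\frac{d(x-t_1)}{y}$. Once $A$ is in hand, everything reduces to linear algebra over $\mathbb{C}(t_1,t_2,t_3)$. As a further sanity check one may observe that the resulting $R$ has components equal to the right-hand sides of the Ramanujan relations for $E_2,E_4,E_6$ under the identification of $t_1,t_2,t_3$ with appropriate multiples of the Eisenstein series, which is precisely the geometric interpretation the construction is designed to capture.
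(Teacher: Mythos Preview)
The paper does not actually prove this proposition: it is stated in the preliminary subsection on elliptic curves purely as motivation, with the proof deferred to the cited reference \cite{mov-elliptic}. So there is no ``paper's own proof'' to compare against here.

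That said, your outline is correct and is precisely the standard argument one would use (and the one carried out in the cited reference): compute the Gauss--Manin connection matrix $A=A_1\,dt_1+A_2\,dt_2+A_3\,dt_3$ in the frame $(\alpha,\omega)$ by differentiating under the integral sign and reducing pole order via the relations coming from $d\big((x-t_1)^k/y\big)$, then observe that compatibility with the cup product forces $A$ to be trace-free, so the condition $A(R)=\begin{pmatrix}0&-1\\0&0\end{pmatrix}$ is a $3\times 3$ linear system whose coefficient determinant is a unit on $T$. Your caveat about the $t_1$-dependence of $\omega=x\,dx/y$ versus $(x-t_1)\,dx/y$ is exactly the point where computational errors typically creep in, so it is good that you flagged it.
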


If $R$ is written as a system of differential equations, after multiplying $t_i$ by some constants, we get exactly the Ramanujan equations. By looking at the 1-dimensional locus $L$ for which $R$ generates the tangent space, the maps $t_1,t_2,t_3$ restricted to $L$ will be the Eisenstein series after a change of coordinates. This is a sign that we can generalize modular forms by looking at functions on a suitable moduli space for each case.  

\begin{remark} The locus $L$ in the last paragraph has an interpretation based on the periods (integrals) of $\alpha$ and $\omega$ over integral cycles. From this point of view, $L$ is a fundamental domain for the action of an algebraic group and the natural coordinate for $L$ is given by a quotient of two such periods. For more details, see sections 7 and 8 of~\cite{mov-elliptic}. In the section 4 of the present text, we give this interpretation for the case we are considering.
\end{remark}

\subsection{Mirror Quintic, Deligne Conics and Relative Algebraic de Rham Cohomology}

After the motivation, we introduce our main objects, which are the mirror quintic threefold and the Deligne conics.
\label{alg-derham}
\begin{definition} Let $\psi^5\ne 1$ and let $G$ be the group given by
\label{MQdef}
\begin{equation}
\label{groupG}
G=\left.\left\{\left(a_{0}, \ldots, a_{4}\right) \in \mathbb{Z}_{5}^{5}: \sum_{i} a_{i} \equiv 0 \bmod 5\right\} \middle/ \mathbb{Z}_{5}\right.,
\end{equation}
where $\mathbb{Z}_5$ is embedded diagonally. This group acts on $\mathbb{P}^4$ in the natural way:
$$
(a_0,\ldots,a_4)\bullet [x_0,\ldots,x_4]\mapsto [\mu^{a_0}x_0:\ldots\mu^{a_4}x_4],
$$
where $\mu$ is a primitive fifth root of unit. For us, a {\bf mirror quintic} $X_\psi$ is the resolution of singularities of the quotient
\label{mirror-quintic}
\begin{equation}
\left\{\left[x_{0}: x_{1}: x_{2}: x_{3}: x_{4}\right] \in \mathbb{P}^{4} \mid x_{0}^{5}+x_{1}^{5}+x_{2}^{5}+x_{3}^{5}+x_{4}^{5}-5 \psi x_{0} x_{1} x_{2} x_{3} x_{4}=0\right\} / G.
\end{equation}
\end{definition}
After the quotient and the resolution, one can observe that the varieties obtained will have the Calabi-Yau property. For details, see~\cite{greene-plesser} or~\cite[Section 2.2]{katz-cox}. As we have stated in the introduction, our framework will include two rational curves, the Deligne conics $C_{\pm}\subset X_\psi$. Besides being rational, these curves are homologous as cycles in $H_2(X,\mathbb Z)$. They are defined by the equations
\begin{equation}
C_{\pm}=\left\{ x_{0}+x_{1}=0, x_{2}+x_{3}=0, {x_4}^{2} \pm \sqrt{5\psi}x_{1} x_{3}=0\right\}.
\end{equation}

Our goal now is, instead of considering forms on the absolute cohomology, to consider the relative algebraic de Rham cohomology of the pair $(X_\psi, C_+\cup C_-)$. In order to do this, we recall the definition for the relative algebraic de Rham cohomology in general, which is a generalization of the original definition from~\cite{Grothendieck}.

\begin{definition}
\label{relative-derham-coho}
Let $Y\subset X$ be a closed subvariety of $X$. Define the sheaf $\Omega^m_{X,Y}$ as
$$
\Omega^m_{X,Y}(U) = \Omega^m_X(U)\oplus\Omega^{m-1}_Y(U\cap Y), 
$$
and define a differential operator as 
$$
{\rm d}(\omega,\alpha) = ({\rm d}\omega, \omega|_Y - {\rm d}\alpha),
$$
where the d's appearing on the right-hand side are the boundary operators of $X$ and $Y$. We define the the m-th relative de Rham cohomology of X with boundary on Y, denoted by $H_{\rm dR}^m(X,Y)$ as the m-th-hypercohomology of the complex of sheaves
$$
0\to\Omega^0_{X,Y}\mathop{\to}^d \Omega_{X,Y}^1\mathop{\to}^d\dots\mathop{\to}^d \Omega^m_{X,Y}\to\dots
$$
\end{definition}
With this definition, we get a natural long exact sequence of the pair. In our case of $C_+$ and $C_-$, we get $\dim \HdR^3(X_\psi, C_+\cup C_-)=5$ and a surjection $\HdR^3(X_\psi, C_+\cup C_-)\to\HdR^3(X_\psi)$, which means that we can come up with a basis for $H^3_{dR}(X_\psi,C_+\cup C_-)$ by choosing elements in the pre image of any basis for $\HdR^3
(X_\psi)$ and adding the image of a non zero element from $\HdR^2(C_+\cup C_-)$. Recall that $\dim \HdR^3(X_\psi) =4$.

\subsection{Moduli Space}

\label{moduli1}
After the motivation coming from the Elliptic curve, our goal is to construct an analogous moduli space, we are going to assign coordinates for the space of triples $(X_{\psi},C_{\pm},\omega)$, where $\omega$ is a holomorphic differential 3-form on $X_\psi$. First, we recall that, from~\cite[Section 2.1]{MovArticleMQ}, there are affine coordinates for the pairs $(X, \omega)$: one can associate coordinates $(t_0, t_4)$ to a mirror quintic $X_\psi$, with $\psi^{-5} = \frac{t_4}{t_0^5}$. If we recall $G$ from Definition~\ref{MQdef}, we have:
\begin{align}
\label{eqt_0t_4}
X_{t_{0}, t_{4}} &:=\{f(x)=0\} / G \\
f(x) &:=-t_{4}x_0^5-x_{1}^{5}-x_{2}^{5}-x_{3}^{5}-x_{4}^{5}+5 t_{0}x_0 x_{1} x_{2} x_{3} x_{4}.
\end{align} The 3-form dependent on $(t_0, t_4)$ is the form induced on the resolution of the quotient via the residue form (written in the affine coordinates $x_0=1$, as in~\cite{MovArticleMQ}, Section 2.1)
$$
\omega_{1}:=\frac{d x_{1} \wedge d x_{2} \wedge d x_{3} \wedge d x_{4}}{d f}
$$
which is clearly invariant after the action of the group $G$. Note that these coordinates are only defined $t_0^5\ne t_4$ and $t_4\ne0$. The curves $C_{\pm}$ also depend on these coordinates, but, in order to avoid taking tenth and square roots, we introduce new coordinates $s_0$ and $s_4$ which satisfy $s_0^2 = t_0$ and $s_4^{10} = t_4$. In these coordinates the curves are the resolution of singularities of the quotient of
\begin{equation}
\label{curves}
C_{\pm}=\left\{ s_4^2x_{0}+x_{1}=0, x_{2}+x_{3}=0, s_4{x_4}^{2} \pm \sqrt{5}s_0 x_{1} x_{3}=0\right\} \subset X_{t_0,t_4},
\end{equation}
by the group $G$. In the appendix of~\cite{mor-wal}, they give an explicit way to solve this singularities. We can, therefore, associate a pair $(s_0,s_4)$ to a triple $(X_{s_0,s_4}, C_{\pm}, \omega)$. Of course, this association is only defined when $s_4^{10} \ne s_0^{10}$ and $s_4s_0\ne 0$, since for $s_0=0$ both curves $C_+$ and $C_-$ are equal. We end up with a quasi affine space ${\sf S_{op}}:= \mathbb C^2\setminus\{s_0s_4(s_0^{10} - s_4^{10}) = 0\}$ parametrizing the triples $(X_{s_0,s_4},C_{\pm},\omega)$. This parametrization has an important property which we state below:

\begin{proposition} Let $r\in \mathbb C^{*}$. If $(s_0,s_4)$ is the point corresponding to $(X,\omega, C)\in M$, then $(rs_0,rs_4)$ is the point corresponding to $(X,r^{-2}\omega, C)$.
\label{property}
\end{proposition}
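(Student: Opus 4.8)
The plan is to realise the claimed isomorphism of triples by an explicit linear automorphism of $\Bbb P^{4}$. The point is that $(s_0,s_4)\mapsto (rs_0,rs_4)$ sends ${\sf S_{op}}$ to itself and leaves $\psi^{-5}=s_4^{10}/s_0^{10}$ unchanged, so $X_{s_0,s_4}$ and $X_{rs_0,rs_4}$ ought to be isomorphic, and the isomorphism should be induced by
$$
\phi:[x_0:x_1:x_2:x_3:x_4]\longmapsto [x_0:r^{2}x_1:r^{2}x_2:r^{2}x_3:r^{2}x_4].
$$
First I would check this at the level of the defining polynomial: writing $f_{s_0,s_4}(x)=-s_4^{10}x_0^{5}-x_1^{5}-x_2^{5}-x_3^{5}-x_4^{5}+5s_0^{2}x_0x_1x_2x_3x_4$ as in \eqref{eqt_0t_4} (with $t_0=s_0^{2}$, $t_4=s_4^{10}$), a one-line substitution gives $f_{rs_0,rs_4}(x_0,r^{2}x_1,r^{2}x_2,r^{2}x_3,r^{2}x_4)=r^{10}\,f_{s_0,s_4}(x)$, so $\phi$ maps $\{f_{s_0,s_4}=0\}$ isomorphically onto $\{f_{rs_0,rs_4}=0\}$ in $\Bbb P^{4}$. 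Being diagonal, $\phi$ commutes with the action of $G$ from Definition \ref{MQdef}, hence descends to the quotients; and since it carries the (explicitly resolved) singular strata of one quotient onto those of the other, it lifts to an isomorphism of the resolutions $X_{s_0,s_4}\xrightarrow{\ \sim\ }X_{rs_0,rs_4}$.

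Next I would check compatibility with the Deligne conics and with the holomorphic form. For the conics, denoting the target coordinates by $y_i$ and substituting $x_0=y_0$, $x_i=y_i/r^{2}$ into the three equations \eqref{curves} defining the pair $C_{\pm}$ attached to $(s_0,s_4)$, one obtains, after clearing the powers of $r$, exactly the equations defining $C_{\pm}$ for the parameter $(rs_0,rs_4)$, with the sign label $\pm$ preserved; hence $\phi$ carries the first pair of curves onto the second, and this persists after the quotient and the resolution. For the form, in the affine chart $x_0\neq 0$ with coordinates $z_i=x_i/x_0$ we have $\omega_1=\dfrac{dz_1\wedge dz_2\wedge dz_3\wedge dz_4}{d\bar f}$ (the Poincar\'e residue, as in \cite{Grothendieck}), where $\bar f_{s_0,s_4}(z)=-s_4^{10}-z_1^{5}-z_2^{5}-z_3^{5}-z_4^{5}+5s_0^{2}z_1z_2z_3z_4$ and $\phi$ acts by $z_i\mapsto r^{2}z_i$; since $\phi^{*}(dz_1\wedge dz_2\wedge dz_3\wedge dz_4)=r^{8}\,dz_1\wedge dz_2\wedge dz_3\wedge dz_4$ while $\phi^{*}\bar f_{rs_0,rs_4}=r^{10}\bar f_{s_0,s_4}$, taking residues yields $\phi^{*}\omega_1(rs_0,rs_4)=r^{-2}\,\omega_1(s_0,s_4)$.

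Combining the three points, $\phi$ is an isomorphism of triples
$$
\bigl(X_{s_0,s_4},\,C_{\pm},\,r^{-2}\omega_1(s_0,s_4)\bigr)\ \xrightarrow{\ \sim\ }\ \bigl(X_{rs_0,rs_4},\,C_{\pm},\,\omega_1(rs_0,rs_4)\bigr),
$$
so composing $\phi$ with an isomorphism witnessing that $(s_0,s_4)$ represents $(X,\omega,C)$ exhibits $(rs_0,rs_4)$ as the point representing $(X,r^{-2}\omega,C)$, which is the assertion. The only genuinely delicate step — everything else being bookkeeping of powers of $r$ — is the descent through the $G$-quotient together with the lift to the resolution of singularities: one must confirm that $\phi$ is compatible with the explicit resolution employed (cf. \cite{greene-plesser} and the appendix of \cite{mor-wal}), so that the lifted map really is an isomorphism of the smooth Calabi-Yau models carrying the resolved curves to one another.
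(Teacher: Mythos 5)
Your proof is correct and follows essentially the same route as the paper: the explicit diagonal automorphism $x_i\mapsto r^2x_i$ ($i\ge 1$) of $\Bbb P^4$, the check that it carries $\{f_{s_0,s_4}=0\}$ and the conics \eqref{curves} to their counterparts at $(rs_0,rs_4)$, and the $r^{-2}$ scaling of the residue form. The only difference is that you verify the scaling of $\omega_1$ directly from the Gelfand--Leray residue (and flag the descent through the $G$-quotient and the resolution), whereas the paper cites \cite{MovArticleMQ} for that step and only checks the curves explicitly.
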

\begin{proof}We know that the isomorphism $(x_0,x_1,x_2,x_3,x_4)\mapsto (x_0,rx_1,rx_2,rx_3,rx_4)$ between $X_{s_0,s_4} = X_{t_0,t_4}$ and $X_{rs_0,rs_4} = X_{r^2t_0,r^{10}t_4}
$ takes $\omega(t_0,t_4)$ to $r^{-2}\omega(r^2t_0,r^{10}t_4)$ (see~\cite{MovArticleMQ}, sec. 2.1). Using that $t_0 = s_0^2$ and $t_4 = s_4^{10}$, we have our result. We just need to check that it maps the curve $C$ to its correspondent. Indeed, we have
$$
C_{rs_0,rs_4} = \left\{r^2s_4^2x_{0}+x_{1}=0, x_{2}+x_{3}=0, rs_4x_{4}^{2} + rs_0\sqrt{5} x_{1} x_{3}=0\right\} \subset X_{rs_0,rs_4}
$$ 
and
$$
C_{s_0,s_4} = \left\{s_4^2x_{0}+x_{1}=0, x_{2}+x_{3}=0, s_4x_{4}^{2}+s_0\sqrt{5} x_{1} x_{3}=0\right\} \subset X_{s_0,s_4}.
$$
Taking a point of the second curve and applying the isomorphism, we have that the equations of the first one are satisfied. 
\begin{align*}
&r^2s_4^2x_{0}+(r^2x_{1})= r^2(s_4x_0 + x_1) = 0,\\
&(r^2x_2) + (r^2x_3) = r^2(x_2 + x_3) = 0,\\
 &rs_4(r^2x_{4})^{2} + rs_0\sqrt{5} (r^2x_{1})(r^2 x_{3})= r^5(s_4x_{4}^{2}+s_0\sqrt{5} x_{1} x_{3})=0.
\end{align*}
This finishes the proof.
\end{proof}

Considering the zero locus of the equation $f$ from~\eqref{eqt_0t_4} in the product $\mathbb P^4\times \sf S_{op}$, we get a family $\mathcal X\to \sf S_{op}$. Now, $C_{\pm}$ induce a subfamily $\mathcal Y\subset \mathcal X$ corresponding to the curves. The next step is to define a structure on this triple which encodes properties of the relative algebraic de Rham cohomology of each element on the family. 

\subsection{Relative Gauss-Manin Connection}
\label{gauss-manin}
One of the most important tools in our paper is the algebraic Gauss-Manin connection. It was first introduced in the paper~\cite{katz-oda}, by Katz and Oda. They defined the Gauss-Manin connection for any smooth morphism $\pi: X\to S$ of smooth schemes (in other words, a family over $S$). Roughly, it formalizes the notion of differentiating with respect to the parameters of a family. They consider the algebraic de Rham cohomology of the family, which is the hyperderived sheaf 
\begin{equation}
\label{adrc-sheaf}
\mathcal{H}_{dR}^m(X/S):=R^m\pi_*(\Omega^{\bullet}_{X/S})
\end{equation}
defined over $S$. This is the sheaf associated to the presheaf $U\mapsto \mathbb H^m\left(\pi^{-1}(U), \Omega^{\bullet}_{X/S}\right)$, where $\mathbb H$ denotes the hypercohomology. They define a connection
\begin{equation}
\nabla:\mathcal{H}_{dR}^m(X/S)\to \mathcal{H}_{dR}^m(X/S)\otimes \Omega^1_S
\end{equation}
by considering a filtration on $\Omega^{\bullet}_{X}$ and taking the spectral sequence associated to this filtration. Here, we need a relative version of this construction. If we have a smooth closed subvariety $Y\subset X$ for which the restriction of $\pi$ is smooth, it is not difficult to make the same construction and get a connection
\begin{equation}
\label{gmrel}
\nabla:\mathcal{H}_{dR}^m(X, Y/S)\to \mathcal{H}_{dR}^m(X,Y/S)\otimes \Omega^1_S,
\end{equation}
where $\mathcal{H}_{dR}^m(X, Y/S)$ is defined as above, by considering $\Omega^{\bullet}_{X,Y/S}$ and not $\Omega^{\bullet}_{X/S}$ (see Definition~\ref{relative-derham-coho}, but considering $\Omega^{\bullet}_{X/S}$ and $\Omega^{\bullet}_{Y/S}$ instead of $\Omega^{\bullet}_{X}$ and $\Omega^{\bullet}_{Y}$). This definition of the relative version of the Gauss-Manin connection is compatible with the long exact sequence of the pair and thus many computations from the absolute case can be transported to the relative case. The most important example would be the equality
\begin{equation}
\label{commute}
d\left(\int_{\delta}\omega\right) = \int_{\delta}\nabla\omega,
\end{equation}
where $\delta$ is any cycle (absolute or relative). In order to compute the connection for the family $\sf S_{op}$, we will need differential relations among the periods and use~\eqref{commute}. We consider, therefore, the non-homogenous version of the Picard-Fuchs equation 
\begin{equation}
\label{PFIH}
\theta^{4}-z\left(\theta+\frac{1}{5}\right)\left(\theta+\frac{2}{5}\right)\left(\theta+\frac{3}{5}\right)\left(\theta+\frac{4}{5}\right)=15 \frac{\sqrt{5^{-5} z}}{8}
\end{equation}
satisfied by the integral of a holomorphic three-form on $X_\psi$ over the homology connecting the two curves $C_{\pm}$. Above, $z = \psi^{-5}$  and $\theta = z\del{z}$. The equation in the form above is in~\cite[page 1170]{pand-wal-sol} using coordinates $t$ with $z = 5^{5}e^t$. Observe that if we consider the right-hand side of~\eqref{PFIH} to be zero, we get the classical equation appearing on~\cite{CdlOGP} for which the periods given by integrals of the holomorphic three form over absolute homology classes are solutions.

\subsection{Mixed Hodge structure on the relative algebraic de Rham cohomology}
\label{MHS and GM}
Consider the family $\mathcal X\to \sf S_{op}$ defined in Section~\ref{moduli1}. We want to study the structure we have on the relative algebraic de Rham cohomology. Recall from algebraic topology that we can define the cup product in the relative cohomology by restricting the cup product from absolute cycles to relative ones. In the case we are considering, i.e., the cohomology $\HdR^3(X,Y)$, where $X$ is a mirror quintic and $Y$ is the union of the two curves $C_{\pm}$, we have that the element coming from $\HdR^2(Y)$ is degenerate for the product. Therefore, we can use the computation from~\cite[Section 2.3]{MovArticleMQ}, since the intersection matrix will be the same after adding zeros to the first column and row.

The Mixed Hodge structure on the relative cohomology is defined by means of the {\it mixed cone} (see~\cite[Theorem 3.22 and Example 3.24]{peters-steenbrink}). The important part is that the long exact sequence of the pair ends up being a long exact sequence of mixed Hodge structures and so the Hodge and the weight filtration may be defined via this sequence. Also, the Gauss-Manin connection satisfies the Griffths transversality property for the Hodge filtration, that is, $\nabla(F^p) \subset F^{p-1}\otimes\Omega^1_S$. This is easily seen using that fact that the Gauss-Manin connection commutes with the long exact sequence and that the long exact sequence is a sequence of mixed Hodge structure. We have a nice description of the mixed Hodge structure for our case, given by the proposition below.

\begin{proposition}
\label{str-MHS}
We have that the image of the map $\alpha: H_{\rm dR}^2(Y) \to H_{\rm dR}^3(X,Y)$ defined via the long exact sequence is one-dimensional and it is contained in $F^1H^3_{\rm dR}(X,Y)\setminus F^2H^3_{\rm dR}(X,Y)$, where $F$ represents the Hodge filtration. Also this image is exactly the second piece of the weight filtration $W_2(H^3_{\rm dR}(X,Y))$. In particular, any generator of this image satisfy properties (iii) and (iv) of Definition~\ref{relat-enha-mq}. 
\end{proposition}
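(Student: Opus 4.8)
\emph{Proof proposal.} The plan is to read the statement off the long exact sequence of the pair $(X,Y)$, with $Y=C_+\cup C_-$, which by the mixed cone construction recalled above (see \cite[Theorem 3.22 and Example 3.24]{peters-steenbrink}) is an exact sequence of mixed Hodge structures; beyond this I only use that morphisms of mixed Hodge structures are strictly compatible with the weight and Hodge filtrations, that $X$ is smooth and projective, and that $C_+$ and $C_-$ are homologous. Write $\partial$ for the connecting map (the one denoted $\alpha$ in the statement) and $r$ for the restriction map; the relevant segment is
$$
\HdR^2(X)\xrightarrow{\;r\;}\HdR^2(Y)\xrightarrow{\;\partial\;}\HdR^3(X,Y)\xrightarrow{\;j\;}\HdR^3(X).
$$
Since $\dim_{\Bbb C}Y=1$ we have $\HdR^3(Y)=0$, so $j$ is surjective and $\operatorname{im}(\partial)=\ker(j)$.

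First I would pin down the weight and Hodge type of $\operatorname{im}(\partial)$. Each $C_\pm$ is a smooth rational curve, so $\HdR^2(Y)\cong\HdR^2(C_+)\oplus\HdR^2(C_-)$ is pure of weight $2$ and of type $(1,1)$; in particular $F^1\HdR^2(Y)=\HdR^2(Y)$ and $F^2\HdR^2(Y)=0$. As $\partial$ is a morphism of mixed Hodge structures it sends $W_2$ into $W_2$ and $F^1$ into $F^1$, so $\operatorname{im}(\partial)\subseteq W_2\HdR^3(X,Y)$ and $\operatorname{im}(\partial)\subseteq F^1\HdR^3(X,Y)$; strictness for the Hodge filtration then gives $\operatorname{im}(\partial)\cap F^2\HdR^3(X,Y)=\partial\bigl(F^2\HdR^2(Y)\bigr)=0$. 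Conversely, $\HdR^3(X)$ is pure of weight $3$ since $X$ is smooth projective, hence $j\bigl(W_2\HdR^3(X,Y)\bigr)\subseteq W_2\HdR^3(X)=0$, i.e.\ $W_2\HdR^3(X,Y)\subseteq\ker(j)=\operatorname{im}(\partial)$. Combining, $W_2\HdR^3(X,Y)=\operatorname{im}(\partial)$, and this subspace lies in $F^1\setminus F^2$.

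Then I would check that $\operatorname{im}(\partial)$ is one-dimensional — this is exactly the point where the curves being homologous is used. Exactness gives $\dim\operatorname{im}(\partial)=\dim\HdR^2(Y)-\dim\operatorname{im}(r)=2-\dim\operatorname{im}(r)$. Identifying $\HdR^2(C_\pm)\cong\Bbb C$ through the trace map, the $C_\pm$-component of $r(\eta)$ for $\eta\in\HdR^2(X)$ is the intersection number $\langle\eta,[C_\pm]\rangle$; since $[C_+]=[C_-]$ in $H_2(X)$ these two numbers agree, so $\operatorname{im}(r)$ is contained in the diagonal of $\HdR^2(C_+)\oplus\HdR^2(C_-)\cong\Bbb C^2$, and it is nonzero because $\HdR^2(X)$ contains an ample class while the $C_\pm$ are effective. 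Hence $\dim\operatorname{im}(r)=1$, so $\dim\operatorname{im}(\partial)=1$, and as a byproduct $\dim\HdR^3(X,Y)=\dim\HdR^3(X)+1=5$. A generator $\alpha_0$ of $\operatorname{im}(\partial)=W_2\HdR^3(X,Y)$ then lies in $F^1\setminus F^2$ and in $W_2$, which are precisely conditions (iii) and (iv) of Definition \ref{relat-enha-mq}.

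The single genuinely delicate input is the opening sentence: that the topological coboundary $\HdR^2(Y)\to\HdR^3(X,Y)$ is a morphism of mixed Hodge structures with no Tate twist for the mixed-cone structure on relative cohomology, so that the filtration-preservation and strictness used throughout apply without a shift; everything after that is purity/strictness bookkeeping together with the elementary fact $[C_+]=[C_-]$. A secondary point to be verified against \cite{mor-wal} is that after the resolution of singularities the $C_\pm$ remain smooth irreducible rational curves, so that $\HdR^2(Y)$ is Hodge--Tate of weight $2$ and the restriction map computes intersection numbers as claimed.
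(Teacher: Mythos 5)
Your proof is correct and follows essentially the same route as the paper's: both read conditions (iii) and (iv) off the long exact sequence of the pair viewed as a sequence of mixed Hodge structures, using strictness together with the fact that $H^2_{\rm dR}(Y)$ is pure of type $(1,1)$. You in fact supply two steps the paper leaves implicit --- the reverse inclusion $W_2H^3_{\rm dR}(X,Y)\subseteq\operatorname{im}(\partial)$ via purity of $H^3_{\rm dR}(X)$ in weight $3$, and the one-dimensionality of the image via $[C_+]=[C_-]$ and the nonvanishing of the restriction of an ample class --- so your write-up is, if anything, more complete.
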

\begin{proof} 
Recall that, as $Y$ is the union of two $\mathbb P^1$, it has $F^2H^2_{\rm dR}(Y) = 0$ and $F^1H^2_{\rm dR}(Y) = H^2_{\rm dR}(Y)$. Therefore, as the long exact sequence of the pair is a sequence of mixed Hodge structures, we conclude that the image is contained in $F^1$ and not on $F^2$ (since we have $\alpha(F^{p}(H^2_{\rm dR}(Y))) = {\rm Im}(\alpha)\cap F^pH^3_{\rm dR}(X,Y)$). For the part about the weight filtration, we just need to use that the weight filtration for $H^2_{\rm dR}(Y)$ is given by $W_0 = 0$, $W_1 = 0$ and $W_k = H^2_{\rm dR}(Y)$ for $k\geq 2$. Therefore, by again using that the exact sequence is a sequence of mixed Hodge structures, we conclude that the image of $\alpha$ is $W_2 H^3_{\rm dR}(X,Y)$. 
\end{proof}

\section{Proofs of the main theorems}
\label{vecfield}

In this section, our goal is to prove the two main theorems stated in the introduction. For simplicity, throughout this section, we denote $Y=C_+\cup C_-$.

\subsection{Proof of Theorem~\ref{moduliS}}
\label{proof1}

\begin{proof}[Proof of Theorem~\ref{moduliS}]

Consider the basis $W=\{\omega_1,\dots,\omega_4\}$ for $H^3_{\rm dR}(X)$, with $\omega_1$ a holomorphic 3-form and $\omega_i := \nabla_{\frac{\partial}{\partial t_0}}(\omega_{i-1})$, where $t_0:=s_0^2$ and $\nabla$ is the Gauss-Manin connection on the absolute cohomology (see Section~\ref{moduli1}). Notice that we use derivatives with respect to $t_0$ instead of $s_0$, since it makes it easier to compare with the absolute case. Of course, we can go from $t_0$ to $s_0$ via the relation
$$
\frac{\partial}{\partial t_0 } = \frac1{2s_0}\frac{\partial}{\partial s_0}.
$$

Using that the map $H_{dR}^3(X,Y)\to H_{dR}^3(X)$ is surjective, we can take elements $\omega_1,\dots,\omega_4\in H_{dR}^3(X,Y)$ corresponding to the basis $W$. Then, we choose a generator of ${\rm Im}(\HdR^2(Y)\to \HdR^3(X,Y))$ and call it $\omega_0$. This generator is chosen to be the image of the of the Poincaré dual of the difference of the homology classes $[C_+]$ and $[C_-]$. In this way, we get that the integral of $\omega_0$ over the homology connecting the two curves is 1. Now, consider the matrix:
\begin{equation}
\label{S}
S=\begin{bmatrix}
1 & 0 &0 &0 & 0\\
0& 1 & 0 & 0 & 0 \\
0&a & b & 0 & 0 \\
s_{7} &c & s_{6} & s_{5} & 0 \\
s_{8} &s_{1} & s_{2} & s_{3} & d
\end{bmatrix}
\end{equation}
and assume it is invertible, which implies $s_5\ne0$.
The basis $\alpha = S\omega$ satisfy all properties on Definition~\ref{relat-enha-mq} except for (ii). Indeed, by Proposition~\ref{str-MHS} above, (iii) and (iv) are satisfied and, as the map $H^3(X,Y)\to H^3(X)$ preserves filtrations and the Gauss-Manin connection sends $F^i$ to $F^{i-1}$, we have condition (i). Condition (v) is satisfied by construction. Demanding $S[\langle \omega_i,\omega_j\rangle]S^{\rm tr} = \Phi$, that is, condition (ii), we get equations relating $a,b,c,d$ and the other variables:
\begin{align}
cb-s_{6}a &=3125 s_{0}^{6}+s_{2}, \\
d &=-bs_{5}, \\
s_{5} a &=-3125 s_{0}^{8}-s_{3}, \\
d &=625\left(s_{4}^{10}-s_{0}^{10}\right).
\end{align}
To perform this computation, we make use of the intersection product computed in~\cite[Proposition 3]{MovArticleMQ} and the fact that $\alpha_0$ is degenerate for the intersection product (see Section~\ref{MHS and GM}). This relations imply that we can drop the variables $a$, $b$, $c$ and $d$ and only consider, besides $s_0$ and $s_4$, five coordinates $s_1,s_2,s_3,s_5,s_6$, which are the same as the corresponding $t_i$ in~\cite{MovArticleMQ}, and the extra two coordinates $s_7$ and $s_8$ which only appear in the relative case. Notice that, for each matrix $S$, we obtain a different basis $\alpha$ and for each basis $\alpha$, we obtain a matrix by inverting $S$ and solving $\omega = S^{-1}\alpha$.
\end{proof}

\subsection{Proof of Theorem~\ref{Ramanujan}}
\label{proof2}
To prove Theorem~\ref{Ramanujan}, we need first to compute the Gauss-Manin connection in the basis $\alpha$. Fix $z=\frac{t_4}{t_0^5} = \frac{s_4^{10}}{s_0^{10}}$ and consider the non-homogenous Picard-Fuchs differential equation~\ref{PFIH}. Let $\eta_1 = t_0\omega_1$ and $\eta_0 = \omega_0$. Those forms are the ones we get by pulling back $\omega_0$ and $\omega_1$ via the isomorphism $X_{1,\frac{s_4}{s_0}}\cong X_{s_0,s_4}$ (see Proposition~\ref{property}). Define $\eta_i = \nabla_{\frac{\partial}{\partial z}}(\eta_{i-1})$. By the definition of $z$, we get a relation between $\frac{\partial}{\partial z}$ and $\frac{\partial}{\partial t_0}$ given by
\begin{equation}
\frac{\partial}{\partial z}=\frac{-1}{5} \frac{t_{0}^{6}}{t_{4}} \frac{\partial}{\partial t_{0}}.
\end{equation}
It is easy therefore to get a relationship between the basis $\eta$ and $\omega$. We call this matrix $C$.
%\begin{equation}
%C=\begin{bmatrix}
%1& 0 & 0 & 0 & 0\\
%0 & \frac1{s_0^2} & 0 & 0 & 0\\
%0 & \frac{-1}{s_0^4} & \frac{-5s_4^{10}}{s_0^{14}} & 0 & 0\\
%0 & \frac2{s_0^6} & \frac{40s_4^{10}}{s_0^{16}} &\frac{25s_4^{20}}{s_0^{26}} & 0\\
%0 & \frac{-6}{s_0^8} & \frac{-330s_4^{10}}{s_0^{18}} & \frac{-525s_4^{20}}{s_0^{28}} &\frac{ -125s_4^{30}}{s_0^{36}}\\
%\end{bmatrix}
%\end{equation}
As we observed, the Picard-Fuchs equation~\eqref{PFIH} is satisfied by the integral of $\eta_1$ over the homology connecting the curves $C_{+}$ and $C_{-}$. Using that $\int \eta_0 = \int \omega_0 = 1$, the fact that $\eta_1$ satisfy~\eqref{PFIH} implies the following equality:
\begin{multline}
\int_{\delta_0}\nabla_{\frac{\partial}{\partial z}}
\eta_4=\frac{-p}{z^4(z-1)}\int_{\delta_0}\eta_0 + \frac{-24}{625 z^{3}(z-1)} \int_{\delta_0}\eta_1 +\\
+\frac{-24 z+5}{5 z^{3}(z-1)} \int_{\delta_0}\eta_2+\frac{-72 z+35}{5 z^{2}(z-1)} \int_{\delta_0}\eta_3+\frac{-8 z+6}{z(z-1)}\int_{\delta_0}\eta_4.
\end{multline}
By comparing the integrands, we can see that the Gauss-Manin matrix in the basis $\eta$ is given by:
\begin{equation}
{\sf B_1} = \begin{bmatrix}
0&0&0&0&0\\
0&0&1&0&0\\
0&0&0&1&0\\
0&0&0&0&1\\
a_0&a_1&a_2&a_3&a_4\\
\end{bmatrix}dz,
\end{equation}
where $a_i$ are the coefficients of $\frac{\partial^i}{\partial z^i}$ in~\ref{PFIH}.
To end, we compute
\begin{equation}
{\sf B_2}:= (dC + C\cdot {\sf B_1})C^{-1},
\end{equation}
which is the Gauss-Manin connection written in basis $\omega$. Observe that the submatrix formed by rows and columns from 2 to 5 is the Gauss-Manin connection in $H_{dR}^3(X)$ as computed in~\cite{MovArticleMQ} Section 2.6.  To compute the matrix in the basis $\alpha$ from Theorem~\ref{moduliS}, we compute:
\begin{equation}
\label{GM}
{\sf A} = (dS + S\cdot {\sf B_2})S^{-1},
\end{equation}
where $S$ is given in~\eqref{S}.

\begin{proof}[Proof of Theorem~\ref{Ramanujan}] We take an unknown vector field $\sf R$ and let its first six coordinates be equal to the ones in~\cite[Theorem 3]{MovArticleMQ}. As the 4x4 submatrix of the Gauss-Manin Connection is the same as in the absolute case, by direct computation, we end up with
\begin{equation}
\left[\begin{smallmatrix}
0 & 0 & 0 & 0 & 0\\
0 & 0 & 1 & 0 & 0\\
\frac{5^8\left(s_0^{10} - s_4^{10}\right)^2}{s_5^3}s
_7 & 0 & 0 & \frac{5^{8}\left(s_{4}^{10}-t_{0}^{10}\right)^{2}}{s_{5}^{3}} & 0 \\
ds_{7}({\sf R}) +s_8 &0 &0&0&-1\\
ds_{8}({\sf R}) + \frac{5^{12}\left(s_0^{10} - s_4^{10}\right)}{s_5}p & 0 & 0& 0& 0\\
\end{smallmatrix}\right]
\end{equation}
after plugging $\sf R$ in the matrix $\sf A$ from~\eqref{GM}. Now, recalling that ${\rm d}s_i(\sf R)$ is the i-th coordinate of the vector field $\sf R$ and that the first column has to have only zeros except for the third line, we can determine the other coordinates of $\sf R$ uniquely. This gives us the desired vector field and ends the theorem.
\end{proof}
\section{Relationship with Periods}
\label{periods}

In this section, our goal is to explain why are the functions $\sf Y$ and $\sf F$ appearing. For this, we need to look at the period domain of the space $\sf T_{op}$ from Theorem~\ref{moduliS}. For us, a period is simply a number obtained by integration of differential forms over cycles in homology. Here, we are specially interested in the integration of 3-forms over 3 dimensional cycles. Consider a symplectic basis of the homology group $H_3(X)$ given by $\{\delta_1,\delta_2,\delta_3,\delta_4\}$. Also, let $\delta_0$ be the homology connecting the two rational curves $C_+$ and $C_-$. Of course, those five homology classes form a basis for $H_3(X,Y)$.
\begin{definition}
The period matrix is defined as
\begin{equation}
\label{periodmatrix}
{\sf P} = [p_{ij}] = \left[\int_{\delta_i}\alpha_j\right]_{ij},
\end{equation}
where the $\alpha_j$ form a basis satisfying the conditions from~\ref{relat-enha-mq}.
\end{definition}
Using Poincaré duality, one can easily see that this matrix is related to the intersection matrix of the $\alpha_i$'s by the formula
\begin{equation}
\label{relationsperiods}
\left[\left\langle\alpha_{i}, \alpha_{j}\right\rangle\right]=\left[\int_{\delta_{i}} \alpha_j\right]^{\mathrm{T}} \Psi^{-\mathrm{T}}\left[\int_{\delta_{i}} \alpha_{j}\right],
\end{equation}
where $\Psi$ is the intersection matrix of the basis $\delta$, which is given by
$$
\Psi:=\left[\begin{array}{ccccc}
0 & 0 & 0 & 0& 0\\
0&0 & 0 & 1 & 0 \\
0&0 & 0 & 0 & 1 \\
0&-1 & 0 & 0 & 0 \\
0&0 & -1 & 0 & 0
\end{array}\right].
$$
\begin{definition}
Let $\sf G$ be the group given by:
\begin{equation}
{\sf G}:=
\left\{g=\left(\begin{array}{ccccc}
1 & 0 &0 &h_1 &h_2\\
0 &g_{11} & g_{12} & g_{13} & g_{14} \\
0 & 0 & g_{22} & g_{23} & g_{24} \\
0 & 0 & 0 & g_{33}& g_{34} \\
0 &0 & 0 & 0 & g_{44}\\
\end{array}\right), \quad h_k,g_{ij}\in\mathbb C\quad \begin{array}{c}
g_{11}g_{44}=1,\\
g_{22}g_{33} = 1,\\
g_{12}g_{44} + g_{22}g_{34} = 0,\\
g_{13}g_{44}+g_{23}g_{34}-g_{24}g_{33} = 0.
\end{array}\right\}
\end{equation}
\end{definition}
The group ${\sf G}$ acts in an element $(X,\alpha)$ in the moduli space by the right as $(X,\alpha)\bullet g = (X,\alpha g)$ where $\alpha$ is seen as a row vector.  Considering the relations, we can write this group in terms of six "g"-coordinates and two "h"-coordinates, as below:
\begin{equation}
\label{coordinatesG}
(g_1,g_2,g_3,g_4,g_5,g_6,h_1,h_2) = \left(\begin{array}{ccccc}
1 & 0 & 0 & h_1 & h_2 \\
0 & g_{1}^{-2} & -g_{3} g_{1}^{-1} & \left(-g_{3} g_{6}+g_{4}\right) g_{1}^{-2} & \left(-g_{3} g_{4}+g_{5}\right) g_{1}^{-2} \\
0 & 0 & g_{2}^{-1} & g_{6} g_{2}^{-1} & g_{4} g_{2}^{-1} \\
0 & 0 & 0 & g_{2} & g_{2} g_{3} \\
0 & 0 & 0 & 0 & g_{1}^2
\end{array}\right).
\end{equation}
Notice that our coordinate $g_1$ is different from~\cite{MovArticleMQ}: ours is the square root of the one in that article.

\begin{proposition} The action of ${\sf G}$ written on the coordinates $s_i$ of $\sf T_{op}$ is
\label{actiong}
$$
\begin{aligned}
&g \bullet s_{0}=s_{0} g_{1} \\
&g \bullet s_{1}=s_{1} g_{1}^{2}+c g_{1} g_{2} g_{3}+a g_{1} g_{2}^{-1} g_{4}-g_{3} g_{4}+g_{5} \\
&g \bullet s_{2}=s_{2} g_{1}^{3}+s_{6} g_{1}^{2} g_{2} g_{3}+b g_{1}^{2} g_{2}^{-1} g_{4} \\
&g \bullet s_{3}=s_{3} g_{1}^{4}+s_{5} g_{1}^{3} g_{2} g_{3} \\
&g \bullet s_{4}=s_{4} g_1 \\
&g \bullet s_{5}=s_{5} g_{1}^{3} g_{2} \\
&g \bullet s_{6}=s_{6} g_{1}^{2} g_{2}+b g_{1}^{2} g_{2}^{-1} g_{6}\\
&g\bullet s_{7} = s_{7}g_2 + h_1\\
&g\bullet s_{8} = s_{7}g_2g_3 + s_{8}g_1 +h_2\\
\end{aligned}
$$
where $a,b,c$ are the expressions in given in terms of the coordinates $s_i$ from~\eqref{S}.
\end{proposition}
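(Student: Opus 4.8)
The plan is to translate the abstract action $(X,\alpha)\bullet g=(X,\alpha g)$ into the coordinates of Theorem \ref{moduliS} by tracking what it does to the data that defines the $s_i$. Recall from the proof of Theorem \ref{moduliS} that a point of $\sf T_{op}$ with coordinates $(s_0,\ldots,s_8)$ encodes the triple $(X_{s_0,s_4},C_\pm,\alpha)$ with $\alpha=S\omega$, where $S$ is the matrix \eqref{S} (its auxiliary entries $a,b,c,d$ being determined by the $s_i$ through the relations in that proof) and $\omega=(\omega_0,\ldots,\omega_4)$ is the reference basis attached to $(s_0,s_4)$: $\omega_0$ the normalized image of $\HdR^2(Y)$, $\omega_1$ the holomorphic form, and $\omega_i=\nabla_{\partial/\partial t_0}^{i-1}\omega_1$ for $i\ge 2$. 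Acting by $g$ produces the triple $(X_{s_0,s_4},C_\pm,\alpha')$ with $\alpha':=\alpha g$ (row vector times matrix); the mirror quintic and the curves are unchanged, only the frame moves.

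First I would pin down the new base point. Since $\alpha'_1=(\alpha g)_1=g_{11}\,\omega_1=g_1^{-2}\omega_1$, and by Proposition \ref{property} rescaling the holomorphic form by $g_1^{-2}$ is exactly the effect of passing from $(s_0,s_4)$ to $(g_1 s_0,g_1 s_4)$, one reads off $g\bullet s_0=s_0 g_1$ and $g\bullet s_4=s_4 g_1$. Next, the reference basis at the new point is related to $\omega$ by $\omega=D\,\omega^{(g_1 s_0,g_1 s_4)}$ for an explicit diagonal matrix $D=D(g_1)$: the entries corresponding to $\omega_0$ and $\omega_1$ are forced ($\omega_0$ is scale-invariant, $\omega_1\mapsto g_1^{-2}\omega_1$), while the entries for $\omega_2,\omega_3,\omega_4$ are computed by pushing the rescaled holomorphic form and the rescaled base coordinate $t_0$ through the iterated Gauss--Manin derivative via the Leibniz rule. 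Hence $\alpha g=(g^{\mathrm T}SD)\,\omega^{(g_1 s_0,g_1 s_4)}$ (the transpose reflecting the row-vector convention for the action), so the new frame is represented by $S':=g^{\mathrm T}SD$.

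It then remains to check that $S'$ is again of the normal form \eqref{S} and to read off its entries. This is precisely where the defining relations of $\sf G$ enter: the upper-triangular shape of $g$ together with $g_{11}g_{44}=1$, $g_{22}g_{33}=1$, the two linear relations among the $g_{ij}$, and the placement of $h_1,h_2$ in the first row are exactly what forces $g^{\mathrm T}SD$ back into the locus of matrices of the form \eqref{S} --- that is, $\sf G$ is the gauge group of the normalization conditions (i)--(vi) of Definition \ref{relat-enha-mq}. Reading the entries of $S'$ in rows $2,3,4$ (where $s_1,\ldots,s_8$ and $a,b,c,d$ live) and substituting the coordinate expressions \eqref{coordinatesG} for the $g_{ij}$ and $h_k$ yields the eight displayed formulas; the appearance of $a,b,c$ in $g\bullet s_1$ and $g\bullet s_2$ is caused by rows $2$ and $3$ mixing into row $4$ under left multiplication by $g^{\mathrm T}$.

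The conceptual content is light. The first six formulas should match the computation already in \cite[Theorem 3]{MovArticleMQ} after the change of variables $g_1=\sqrt{g_1^{\mathrm{Mov}}}$, $s_0^2=t_0$, $s_4^{10}=t_4$, so the genuinely new work is confined to $s_7,s_8$ (and the new parameters $h_1,h_2$) contributed by the extra row and column of \eqref{S}. I expect the main obstacle to be bookkeeping, in two spots: (a) getting the diagonal transition matrix $D$ right, since the iterated Gauss--Manin derivatives transform under the rescaling isomorphism of Proposition \ref{property} in a way that entangles the scaling of the form with that of the coordinate $t_0$; and (b) verifying, relation by relation, that the four constraints cutting out $\sf G$ are exactly those that make $g^{\mathrm T}SD$ preserve the shape \eqref{S}. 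Both steps are routine but easy to get wrong by a power of $g_1$.
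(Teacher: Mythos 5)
Your proposal follows essentially the same route as the paper: identify the new base point $(g_1s_0,g_1s_4)$ via the rescaling of $\omega_1$ by $g_{11}=g_1^{-2}$ and Proposition \ref{property}, relate the old and new reference bases by the diagonal matrix $\mathrm{diag}(1,k,k^2,k^3,k^4)$ with $k=g_1^2$, and read the new coordinates off the entries of $g^{\mathrm T}SK$. Your added remarks on why the defining relations of $\sf G$ preserve the normal form \eqref{S} and on the power-of-$g_1$ bookkeeping in the transition matrix are consistent with, and slightly more explicit than, the paper's computation.
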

\begin{proof}
We start with a pair $(X_{s_0,s_4},\omega_1)$. This form $\omega_1$, together with its derivatives and a form $\omega_0$ yields a basis of $H^3(X,Y)$. Multiplying by the matrix $S$ from equation~\eqref{S} we get a basis satisfying the conditions in Definition~\ref{relat-enha-mq} depending on the coordinates $s_i$. Now, let $g\in G$ act. By definition,  $\alpha_1 = \omega_1$ would be multiplied by $g_1^{-2}$. In order to write the new element of the moduli space in coordinates, we need to normalize $\omega_1$. Consider the form $g_1^2\omega_1$ in the beginning. After this change, we need to multiply the basis $\omega$ by the matrix 
$$
K = \begin{pmatrix}
1 & 0 & 0 & 0 &0\\
0 & k & 0 & 0 &0\\
0 & 0 & k^2 & 0 & 0\\
0 & 0 & 0 & k^3 & 0\\
0 & 0 & 0 & 0 & k^4\\
\end{pmatrix}
$$
where $k = g_1^2$. This is because of the other forms of the basis $\omega$ are derivatives of $\omega_1$.

 Notice that, by doing this, we are considering the point $(ks_1,ks_4)$ of the moduli space of mirror quintics enhanced with a holomorphic 3-form and two ration curves. Now, the matrix $g^TSK$ takes the basis $\omega_0,\omega_1,\dots,\omega_4$ to its image by the action of $g$. The entries of this matrix are the coordinates of the image. For example, the entry $(5,2)$ should be the coordinate $t_1$, etc. After completing this computation, we get the result.
\end{proof}

\subsection{The $\tau$-matrix}

We want to consider the orbits of the action of ${\sf G}$ on $\sf T_{op}$ and their images by the period map. For this, we notice that ${\sf G}$ acts on the space of matrices by right-multiplication. This action clearly preserves the relations~\eqref{relationsperiods} and is compatible with the action on ${\sf T_{op}}$, in the sense that the period matrix relative to a basis $\alpha\bullet g$ is simply $Ag$, where $A$ is the matrix with respect to $\alpha$.

\begin{proposition}
\label{tau}
For any period matrix $\sf P$ satisfying the relations~\eqref{relationsperiods}, there exists a unique $g\in G$ such that ${\sf P}g$ can be written on the form 
\begin{equation}
\label{taumatrix}
\tau = \begin{pmatrix}
1 & \tau_4 & \tau_5 & 0 & 0 \\
0 & \tau_0 & 1 & 0 & 0 \\
0 & 1 & 0 & 0 & 0\\
0& \tau_1 & \tau_3 & 1 & 0\\
0& \tau_2 & -\tau_0\tau_3 +\tau_1& -\tau_0 & 1
\end{pmatrix}.
\end{equation}
for some $\tau_i$.
\end{proposition}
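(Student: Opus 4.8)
The plan is to exhibit the normalizing element $g\in{\sf G}$ explicitly by a Gauss-elimination-type argument adapted to the block structure of $\sf P$, and then to check uniqueness from the shape of the relations defining $\sf G$. First I would note that $\sf G$ has a distinguished block form: the first row/column is ``frozen'' (the top-left entry is $1$, the rest of the first column is $0$), the $4\times 4$ lower-right block $g_{ij}$ acts on the ``absolute'' part $\alpha_1,\dots,\alpha_4$, and the two parameters $h_1,h_2$ translate the last two entries of the first row. So I would first use the $4\times 4$ block to put the absolute part of the period matrix $\sf P$ into the normal form prescribed by the $3\times 3$-to-$5\times 5$ core of \eqref{taumatrix} — this is exactly the computation already done in \cite{MovArticleMQ} for the enhanced mirror quintic, where one shows the corresponding $4\times 4$ period matrix can be brought to the claimed shape by a unique element of the absolute group. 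The relations $g_{11}g_{44}=1$, $g_{22}g_{33}=1$, etc., are precisely the ones making this absolute normalization compatible with the symplectic form $\Psi$, so I would invoke that result for rows/columns $2$--$5$.

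Second, having fixed the lower-right block, I would use $h_1,h_2$ to kill the two remaining free entries in the first row: after right-multiplication by $g$, the $(1,4)$ and $(1,5)$ entries of ${\sf P}g$ are affine-linear in $(h_1,h_2)$ with invertible linear part (since $g_{33},g_{44},g_{34}$ are already determined and $g_{33}g_{44}\neq 0$), so there is a unique choice of $(h_1,h_2)$ making both vanish, matching the two zeros in the first row of $\tau$. The first column of ${\sf P}g$ is automatically $(1,0,0,0,0)^{\mathrm T}$ because $g$ fixes the first standard basis vector up to scale $1$ and because $\alpha_0$ integrates to $1$ over $\delta_0$ and to $0$ over a basis of the absolute $H_3$ (this is the remark after Definition \ref{relat-enha-mq}, together with the normalization $\int_{\delta_0}\alpha_0=1$ built into the choice of $\omega_0$ in the proof of Theorem \ref{moduliS}). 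The remaining entries of ${\sf P}g$ — the four in column $2$ and column $3$ in rows $2$--$5$ — are then free, and I would simply name them $\tau_0,\dots,\tau_5$; the one constrained entry, the $(5,3)$ slot, is forced to equal $-\tau_0\tau_3+\tau_1$ by the relation \eqref{relationsperiods}, i.e. by ${\sf P}$ (hence ${\sf P}g$) being compatible with the intersection matrix $\Psi$ and the standard symplectic intersection form on the $\alpha_i$'s. Writing out $\tau^{\mathrm T}\Psi^{-\mathrm T}\tau = [\langle\alpha_i,\alpha_j\rangle]$ and reading off the relevant entry gives exactly that identity, so the normal form $\tau$ is consistent.

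For uniqueness, I would argue that if ${\sf P}g$ and ${\sf P}g'$ are both of the form \eqref{taumatrix}, then $h:=g^{-1}g'\in{\sf G}$ stabilizes the set of matrices of that shape, and a short computation with the explicit parametrization \eqref{coordinatesG} shows the stabilizer is trivial: the $(2,3)$ and $(3,2)$ entries of $\tau$ being $1$ pins down $g_1^{2}$ and then $g_2$; the zeros in positions $(2,4),(2,5),(3,4),(3,5)$ of $\tau$ together with the group relations force $g_3=g_4=g_5=g_6=0$ in the ``shift'' sense (i.e. $h$ acts as the identity on the absolute block); and then the two zeros in the first row force $h_1=h_2=0$. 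This mirrors the uniqueness half of the corresponding statement in \cite{MovArticleMQ}, with the extra two coordinates $h_1,h_2$ handled exactly as above.

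The main obstacle I anticipate is purely bookkeeping: correctly matching the ordering of the basis $\alpha_0,\dots,\alpha_4$ and the cycles $\delta_0,\dots,\delta_4$ to the block decomposition of $\Psi$ and of ${\sf G}$, so that the ``absolute'' $4\times 4$ reduction from \cite{MovArticleMQ} can be cited verbatim and so that the single quadratic identity $\tau_{53}=-\tau_0\tau_3+\tau_1$ comes out with the right sign. There is no genuine analytic or geometric difficulty beyond linear algebra over $\Bbb C$ constrained by the bilinear relation \eqref{relationsperiods}; the content is that the constraints defining $\sf G$ are exactly complementary to the entries one wants to normalize, which is what makes both existence and uniqueness fall out.
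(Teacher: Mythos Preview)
Your proposal is correct and follows essentially the same route as the paper: reduce the $4\times4$ absolute block by invoking \cite[Section 3.3]{MovArticleMQ}, then use the two remaining parameters $h_1,h_2$ to clear the $(0,3)$ and $(0,4)$ entries, with uniqueness following because each step solves a determined system. The only difference is presentational: the paper writes down explicit formulas for the coordinates of $g':=g^{-1}$ in terms of the entries ${\sf P}_{ij}$ and then displays the resulting $\tau$-matrix \eqref{matrixtau}, whereas you argue structurally (block decomposition, affine dependence on $h_1,h_2$, stabilizer computation) without producing the closed forms; both establish existence and uniqueness equally well, but the paper's explicit formulas are subsequently used to express $s_7,s_8$ on the locus ${\sf L_{op}}$, so if you intend to carry out those later computations you would eventually need them anyway.
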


\begin{proof}Write $g$ in the form~\eqref{coordinatesG}. Multiplying $g$ by a general matrix $\sf P$ and using the relations~\eqref{relationsperiods}, we get a matrix of the form
$$
{\sf P}g =\left(\begin{array}{ccccc}
1&*&*&*&*\\
0&* & 1 & 0 & * \\
0&1 & 0 & 0 & 0 \\
0&* & * & * & * \\
0&* & * & * & *
\end{array}\right)
$$
Using the computations done in~\cite[Section 3.3]{MovArticleMQ}, writing down the equations for ${\sf P}g=\tau$ when the entries in $\tau$ are independent of $\tau_i$, we get that the first coordinates of $g':=g^{-1}$ are necessarily given by
$$
\begin{aligned}
&(g'_{1})^2={\sf P}_{21}^{-1}, \\
&g'_{2}=\frac{-{\sf P}_{21}}{{\sf P}_{11} {\sf P}_{22}-{\sf P}_{12} {\sf P}_{21}}, \\
&g'_{3}=\frac{-{\sf P}_{22}}{{\sf P}_{21}}, \\
&g'_{4}=\frac{-{\sf P}_{12} {\sf P}_{23}+{\sf P}_{13} {\sf P}_{22}}{{\sf P}_{11} {\sf P}_{22}-{\sf P}_{12} {\sf P}_{21}}, \\
&g'_{5}=\frac{{\sf P}_{11} {\sf P}_{22} {\sf P}_{24}-{\sf P}_{12} {\sf P}_{21} {\sf P}_{24}+{\sf P}_{12} {\sf P}_{22} {\sf P}_{23}-{\sf P}_{13} {\sf P}_{22}^{2}}{{\sf P}_{11} {\sf P}_{21} {\sf P}_{22}-{\sf P}_{12} {\sf P}_{21}^{2}}, \\
&g'_{6}=\frac{{\sf P}_{11} {\sf P}_{23}-{\sf P}_{13} {\sf P}_{21}}{{\sf P}_{11} {\sf P}_{22}-{\sf P}_{12} {\sf P}_{21}} \\
\end{aligned}
$$
It suffices to compute $h'_1$ and $h'_2$. After computing ${\sf P}g$ = ${\sf P}g'^{-1}$, we get:
\begin{equation}
\label{matrixtau}
\tau=\left(\begin{array}{ccccc}
1 & \frac{{\sf P}_{01}}{{\sf P}_{21}} & \frac{{\sf P}_{01}{\sf P}_{22} - {\sf P}_{02}{\sf P}_{21}}{{\sf P}_{11}{\sf P}_{22} - {\sf P}_{12}{\sf P}_{21}} & P & Q \\ 
0 & \frac{{\sf P}_{11}}{{\sf P}_{21}} & 1 & 0 & 0 \\
0 & 1 & 0 & 0 & 0 \\
0 & \frac{{\sf P}_{31}}{{\sf P}_{21}} & \frac{-{\sf P}_{21} {\sf P}_{32}+{\sf P}_{22} {\sf P}_{31}}{{\sf P}_{11} {\sf P}_{22}-{\sf P}_{12} {\sf P}_{21}} & 1 & 0 \\
0 & \frac{{\sf P}_{41}}{{\sf P}_{21}} & \frac{-{\sf P}_{21} {\sf P}_{42}+{\sf P}_{22} {\sf P}_{41}}{{\sf P}_{11} {\sf P}_{22}-{\sf P}_{12} {\sf P}_{21}} & -\frac{{\sf P}_{11}}{{\sf P}_{21}} & 1
\end{array}\right)
\end{equation}
where $P$ and $Q$ depend linearly on $h'_1$, $h'_2$. Making $P=Q=0$, we find expressions for $h_1$ and $h_2$. This shows existence and uniqueness.
\end{proof}

Define ${\sf L_{op}}$ as the locus in the moduli space $\sf T_{op}$ from Theorem~\ref{moduliS} for which the period matrix is of the form~\eqref{taumatrix}. Our goal is to express the functions $s_i$ restricted to this locus in some coordinate. To do this, we first consider the points of $\sf T_{op}$ of the form $(1,0,0,0,y,1,0,0,0)$, where $y^{10} = z$ (the same coordinate used in Section~\ref{proof2}). Then, we compute the period matrix $\sf P$ for these points and find the elements $g\in \sf G$ for which $\sf P$ is of the form~\eqref{taumatrix}. Then, by computing the elements $(1,0,0,0,y,1,0,0,0)\bullet g$, we will get expressions for the coordinates of $\sf L_{op}$. We consider the periods
$$
x_{ij} = \int_{\delta_j}{\tilde{\eta_i}},
$$
where $\tilde{\eta}_0 = \omega_0 = \alpha_0$, $\tilde{\eta_1}$ is the holomorphic three form associated to the point $(1,\frac{s_4}{s_0})$ of the moduli space of triples $(X,\omega,C_{\pm})$ defined in Section~\ref{moduli1} and $\tilde{\eta_i} = \theta(\tilde{\eta_{i-1}})$ (recall $\theta = z\frac{\partial}{\partial z}$). These periods are related to the solutions of the Picard-Fuchs equation via the matrix
\begin{equation}
\left(\begin{array}{l}
x_{01}\\
x_{11} \\
x_{21} \\
x_{31} \\
x_{41}
\end{array}\right)=\left(\begin{array}{ccccc}
\frac1{2\pi^2} & 0 & 0 &\frac{5^4}{2\cdot(2\pi i)^2}&\frac{5^4}{4\cdot(2\pi i)^3}\\
0 & 0 & 0 & 1 & 0 \\
0 & 0 & 0 & 0 & 1 \\
0 & 0 & 5 & \frac{5}{2} & -\frac{25}{12} \\
0 &-5 & 0 & -\frac{25}{12} & 200 \frac{\zeta(3)}{(2 \pi i)^{3}}
\end{array}\right)\left(\begin{array}{c}
\varphi\\
\frac{1}{5^{4}} \psi_{3} \\
\frac{2 \pi i}{5^{4}} \psi_{2} \\
\frac{(2 \pi i)^{2}}{5^{4}} \psi_{1} \\
\frac{(2 \pi i)^{3}}{5^{4}} \psi_{0}
\end{array}\right),
\end{equation}
where $\psi_i$ are solutions for the homogenous equation as it is presented in~\cite[Introduction]{MovArticleMQ} and $\varphi$ is the solution for inhomogeneous equation~\eqref{PFIH} given by the series:
$$
2\sum_{m\,{\rm odd}}^{\infty} \frac{(5m)!!}{(m!!)^5}(5^{-5}z)^{m/2}
$$
where the double exclamation point mean we multiply all the odd numbers less or equal to the number (e.g $7!! = 1\cdot 3\cdot5\cdot 7 = 105$). The expression for $x_{01}$ is taken from~\cite{mor-wal} and~\cite{pand-wal-sol}. Notice that the notation for the series above in~\cite{mor-wal} is different: they take $\tau = \frac{\varphi}{30}$. The expressions for the other periods are taken from~\cite[Introduction]{MovArticleMQ}. To find the period matrix in terms of $x_{ij}$, we need to change from $\tilde{\eta}$ to the basis $\alpha$ from the moduli space.  We consider, therefore, the matrices
$$
S=\left(\begin{array}{ccccc}
1 & 0 & 0 & 0 & 0\\
0 & 1 & 0 & 0 & 0 \\
0 &-5^{5} & -5^{4}(z-1) & 0 & 0 \\
0 & -\frac{5}{z-1} & 0 & 1 & 0 \\
0 & 0 & 0 & 0 & 5^{4}(z-1)
\end{array}\right)
$$
and
$$
 T=\left(\begin{array}{ccccc}
1 &0 & 0&0 &0\\
0&1 & 0 & 0 & 0 \\
0&-1 & -5 & 0 & 0 \\
0&2 & 15 & 25 & 0 \\
0&-6 & -55 & -150 & -125
\end{array}\right),
$$where $T$ takes the basis $\eta$ to the basis $\omega$ and $S$ takes $\omega$ to $\alpha$. Notice that the matrices above have already been used in Section~\ref{vecfield}, but for general $s_i$. Therefore, the period matrix is simply ${\sf P}=[x_{ij}] (ST)^{T}$. Now, using Proposition~\ref{tau} and Proposition~\ref{actiong}, it is easy to find $g\in \sf G$ for which ${\sf P}g$ is of the form~\eqref{taumatrix} and compute the action of $g$ on $(1,0,0,0,y,1,0,0,0)$ to get the $s_i$ restricted to $\sf L_{op}$. The first 7 were already computed in~\cite[Theorem 1]{MovArticleMQ} and the other two are given below:
\begin{equation}\label{t_11}
s_{7} = 5^7(z-1)\frac{x_{01}x_{12}x_{23} - x_{01}x_{13}x_{22} - x_{02}x_{11}x_{23} + x_{02}x_{13}x_{21} + x_{03}x_{11}x_{22} - x_{03}x_{12}x_{21}}{x_{21}},
\end{equation}
\begin{multline}	
s_{8} = 5^7(z-1)\left(x_{01}x_{24} - x_{02}x_{23} + x_{03}x_{22} - x_{04}x_{21}\right)+ \\
+5^6z\left(x_{01}x_{22} + \frac52x_{01}x_{23} - x_{02}x_{21} - \frac52x_{03}x_{21}\right).
\end{multline}

\begin{proposition}
The Gauss-Manin connection restricted to the locus $L$ can be computed in terms of the coordinate $\tau_0$. It is given by:
\begin{equation}
\left.{\sf A}\right|_{L}=
\left(\begin{array}{ccccc}
0&0&0&0&0\\
0&0 & 1 &0 &0 \\
\frac{d\tau_5}{d\tau_0}&0 & 0 & \frac{d \tau_{3}}{d\tau_0} & 0 \\
0&0 & 0 & 0 & -1 \\
0&0 & 0 & 0 & 0
\end{array}\right)d\tau_0,
\end{equation}
where $\tau$ is given by~\eqref{matrixtau}.
\end{proposition}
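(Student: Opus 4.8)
The plan is to read the restricted connection off the period matrix, using the compatibility \eqref{commute} of the Gauss--Manin connection with integration. With the convention of Theorem \ref{Ramanujan}, $\nabla\alpha_j=\sum_k {\sf A}_{jk}\alpha_k$, differentiating the entries $p_{ij}=\int_{\delta_i}\alpha_j$ of ${\sf P}$ and using that the cycles $\delta_i$ are flat sections of the local system gives the identity of matrix-valued $1$-forms $d{\sf P}={\sf P}\,{\sf A}^{\rm tr}$ on ${\sf T_{op}}$, hence ${\sf A}^{\rm tr}={\sf P}^{-1}d{\sf P}$. I would then pull this back along $L\hookrightarrow {\sf T_{op}}$: on $L$ the period matrix is the normal form $\tau$ of \eqref{matrixtau}, and $L$ is one-dimensional with coordinate $\tau_0$, so every entry of $\tau$ is a function of $\tau_0$ on $L$ and $d\tau=\tfrac{d\tau}{d\tau_0}\,d\tau_0$; this yields
\[
\left.{\sf A}\right|_{L}=\Bigl(\tau^{-1}\tfrac{d\tau}{d\tau_0}\Bigr)^{\rm tr}\,d\tau_0 ,
\]
which already proves that the restricted connection is governed by the single coordinate $\tau_0$.

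Next I would evaluate $\tau^{-1}\tfrac{d\tau}{d\tau_0}$ from \eqref{matrixtau} using its block shape: the first column of $\tau$ is $e_1$, the rows and columns $1$--$3$ form the block $A=\bigl(\begin{smallmatrix}1&\tau_4&\tau_5\\0&\tau_0&1\\0&1&0\end{smallmatrix}\bigr)$ with $A^{-1}=\bigl(\begin{smallmatrix}1&-\tau_5&\tau_0\tau_5-\tau_4\\0&0&1\\0&1&-\tau_0\end{smallmatrix}\bigr)$, and the rows and columns $4$--$5$ form the block $\bigl(\begin{smallmatrix}1&0\\-\tau_0&1\end{smallmatrix}\bigr)$. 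Performing the (elementary, if slightly tedious) inversion and multiplication, all entries of $\tau^{-1}\tfrac{d\tau}{d\tau_0}$ come out zero except: a $1$ and a $-1$ giving the $(2,3)$- and $(4,5)$-entries of the claimed matrix; the entries $\tfrac{d\tau_3}{d\tau_0}$ and $\tfrac{d\tau_5}{d\tau_0}$ giving the $(3,4)$- and $(3,1)$-entries; and three further entries sitting in the row corresponding to $\nabla\alpha_1$, in the columns of $\alpha_0$, $\alpha_3$ and $\alpha_4$, equal respectively to $\tfrac{d\tau_4}{d\tau_0}-\tau_5$, $\tfrac{d\tau_1}{d\tau_0}-\tau_3$, and $\tau_0\tfrac{d\tau_1}{d\tau_0}+\tfrac{d\tau_2}{d\tau_0}-\tau_1$.

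To kill these last three entries I would invoke Griffiths transversality for the relative Gauss--Manin connection (Section \ref{MHS and GM}): since $\alpha_1$ spans $F^3$ we have $\nabla\alpha_1\in F^2\otimes\Omega^1$, while $\alpha_0\in F^1\setminus F^2$ and $\alpha_3,\alpha_4\notin F^2$, so the $\alpha_0$-, $\alpha_3$- and $\alpha_4$-components of $\nabla\alpha_1$ vanish. Comparing with the matrix above forces, along $L$,
\[
\tfrac{d\tau_4}{d\tau_0}=\tau_5,\qquad \tfrac{d\tau_1}{d\tau_0}=\tau_3,\qquad \tfrac{d\tau_2}{d\tau_0}=\tau_1-\tau_0\tau_3 ,
\]
and substituting these back leaves exactly the matrix in the statement. (Alternatively, the same three identities follow by hand from the formulas for the $\tau_i$ in \eqref{matrixtau} and the inhomogeneous Picard--Fuchs equation \eqref{PFIH} obeyed by the periods $x_{ij}$.)

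The only genuinely computational ingredient is the block multiplication in the second step; everything else is structural, and the sole point requiring care is that $\tau_0$ is a non-constant function on $L$ (so that $d\tau_0$ generates $\Omega^1_L$), which is clear from the period expressions above in which $\tau_0$ represents the mirror map.
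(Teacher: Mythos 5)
Your proposal is correct and follows essentially the same route as the paper: derive $d\tau=\tau\,{\sf A}^{\rm tr}$ from the flatness of the cycles and \eqref{commute}, compute $(\tau^{-1}d\tau)^{\rm tr}$ from the normal form \eqref{matrixtau}, and kill the unwanted entries by Griffiths transversality, which produces exactly the relations $\tau_5=\tfrac{d\tau_4}{d\tau_0}$, $\tau_3=\tfrac{d\tau_1}{d\tau_0}$ and the one for $\tau_2$ that the paper records. One harmless slip: the $(3,5)$-entry of the raw matrix is not zero but equals $\tfrac{d\tau_1}{d\tau_0}-\tau_3$ (the same expression as the $(2,4)$-entry), so it too vanishes only after applying Griffiths transversality to $\nabla\alpha_2$, not from the block multiplication alone.
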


\begin{proof}
To prove this, use the fact that the Gauss-Manin connection commutes with integrals, in the following sense:
$$
d\left(\int_{\delta}\omega\right) = \int_{\delta}\nabla\omega,
$$
where the integration on the right-hand side takes place only on $\HdR^3(X,Y)$ (recall that $\nabla\omega$ can be written as sum of elements of the form $\omega'\otimes s$, where $s$ a form in $\Omega^1_T$). Now, using this, we get:
$$
d\tau = \left[\int_{\delta_i}\nabla\alpha_j\right]_{i,j} = \left[\sum_k\int_{\delta_i}a_{jk}\alpha_k\right]_{i,j} =  \left[\sum_ka_{jk}\tau_{ik}\right]_{i,j} = \left[\sum_k\tau_{ik}a^{T}_{kj}\right]_{i,j} = \tau\cdot {\sf A}^T.
$$

This implies that the Gauss-Manin connection has to be given by
$$
\left.{\sf A}\right|_{L}=d \tau^{\mathrm{T}} \cdot \tau^{-\operatorname{T}}=\left(\begin{array}{ccccc}
0&0&0&0&0\\

-\tau_5d\tau_0 + d\tau_4&0 & d \tau_{0} & -\tau_{3} d \tau_{0}+d \tau_{1}&-\tau_{1} d \tau_{0}+\tau_{0} d \tau_{1}+d \tau_{2} \\
d\tau_5 & 0 & 0 & d \tau_{3} & -\tau_{3} d \tau_{0}+d \tau_{1} \\
0&0 & 0 & 0 & -d \tau_{0} \\
0&0 & 0 & 0 & 0
\end{array}\right)
$$

By using Griffths transversality and the fact that $\alpha_0$ is on $F^1$, we conclude that positions (2,1), (2,4) and (3,5) have to be zero, since our basis respect the Hodge filtration. After taking $d\tau_0$ out, we have the result. This yields the relations 
\begin{align}
\tau_1 &= -\frac{d\tau_2}{d\tau_0} -\tau_0\frac{d\tau_1}{d\tau_0},\\
\tau_3 &= \frac{d\tau_1}{d\tau_0},\\
\tau_5 &=\frac{d\tau_4}{d\tau_0}.
\end{align}
Notice the first two had already been computed in~\cite{MovArticleMQ}.
\end{proof}

By the uniqueness statement from Theorem~\ref{Ramanujan}, we conclude that $\frac{\partial}{\partial \tau_0}$ is the vector field $\sf R$ restricted to the locus $\sf L_{op}$. If we consider the coordinate $q = e^{2\pi i\tau_0}$, then $\sf R$ becomes $2\pi i q\frac{\partial}{\partial q}$. Writing the functions $\sf F$ and $\sf Y$ (or $\frac{d\tau_5}{d\tau_0}$ and $\frac{d\tau_3}{d\tau_0}$, respectively) in the coordinate $q$ gives us virtual counts of disks with boundary on the real quintic and virtual counts of rational curves on a quintic threefold. To see this, we just need to look at the expressions of the $\tau_i$ in terms of $x_{ij}$ we get, for example, that $\tau_4$ is given by $\frac{x_{01}}{x_{21}}$, i.e., the quotient of a solution for the non homogenous Picard-Fuchs equation by the holomorphic solution to the homogenous equation. This shows that the expressions for $F$ and $Y$ are the expressions in periods that we have for the disk potential and the Yukawa coupling.

\section{Further Extensions}

\subsection{Higher Genus Real GW invariants}
One application of our results is in the so-called BCOV theory, which is responsible for computing, at least conjecturally, higher genus Gromov-Witten invariants. In the closed case, the higher genus invariants were first computed in~\cite{huang_topological_2009} using techniques from~\cite{yamaguchi_topological_2004}. In \cite{movbook}, Movasati has shown that the generating function for higher genus are polynomials in the generators $t_i$, which generalize the classical fact that quasi-modular forms are polynomials in the Eisenstein series. 

In this section, we explain how to adapt this result for the real case, using the ideas from~\cite{alim-lange},~\cite{evidence-walcher} and~\cite{extended-walcher}. We state some results and conjectures, that will be addressed in future work. A more detailed version of this section is being prepared as a separated paper with P. Georgieva.

\subsubsection{Recollection of the closed case}

On the B-side, we can define the $\mathcal F^{(g)}$ as solutions to the so-called \textit{Holomorphic Anomaly Equation}. Even though they are not holomorphic, their expansion in the holomorphic limit (i.e., around the point $z=0$ on the space of parameters of the mirror quintic family -- see~\ref{mirror-quintic}) is conjecturally the generating functions for the genus $g$ Gromov-Witten invariants.For $g=0$ and $g=1$, the functions are computed directly, and for $g>1$, there is a recursive equation that determines the $\mathcal F^{(g)}$ up to a holomorphic term:
\begin{equation}\label{eq: hol-anomaly}
    \bar{\partial}_z \mathcal F^{(g)} = \frac12 \sum_{g_1+g_2=g}C_{\bar{z}}^{zz} \mathcal F_1^{(g_1)}\mathcal F_1^{(g_2)}+\frac12 C^{zz}_{\bar{z}}\mathcal F_2^{(g-1)},
\end{equation}

Here, $\mathcal F^{(g)}_i$ represents $D_i\mathcal F^{(g)}$, i.e., the covariant derivative applied to $\mathcal F$ and $C^{zz}_{\bar z} = \overline{C_{zzz}} G^{z\bar{z}} G^{z\bar{z}} e^{2K}$, where $K$ is the Kähler potential and $G$ is the metric in the space of parameters of the mirror quintic family (coordinate $(5\psi)^{-5} = z$, as used above).
$\mathcal F_i^{(g)}$ can be seen as a section of the bundle $Sym^n(TM)\otimes\mathbf L^{2g-2}$, where $\mathbf L$ is the vacuum bundle of holomorphic $(3,0)$-forms. The bundle is over the space of parameters of the mirror quintic family (i.e., the projective line minus three points). 

The solution of~\eqref{eq: hol-anomaly} was first given in~\cite{bershadsky_holomorphic_1993} and~\cite{bershadsky_kodaira-spencer_1994}. They used Feynman diagrams to solve the equations in terms of special functions called propagators, denoted by $S^{zz}$, $S^z$ and $S$. They satisfy
\[
\bar\partial_z S^{zz} = C_{\bar z}^{zz}\qquad\bar\partial_z S^{z} = G_{z\bar z}S^{zz}
\qquad\bar\partial_z S = G_{z\bar z}S^z 
\]

Yamaguchi and Yau~\cite{yamaguchi_topological_2004} have shown that, after multiplying the functions $\mathcal F^{(g)}$ by a holomorphic factor, they become polynomials in the propagators, $X = \frac 1{1-\psi^5}$ and the derivative of the Kähler potential, which corresponds, in the holomorphic limit, to the holomorphic solution of the Picard-Fuchs equation. In particular, the holomorphic ambiguity that cannot be computed from \eqref{eq: hol-anomaly} is a polynomial in the variable $X$. This last consideration allowed Huang, Klemm and Quackenbush~\cite{huang_topological_2009} to fix the coefficients of the ambiguities up to genus 51 by using properties of the expansions around $\psi = 0$ and $\psi = 1$ coming from physics.

Returning to our framework of the Gauss-Manin connection in disguise program, Movasati in \cite{movbook} considered non-holomorphic Calabi-Yau modular forms in Chapters 7, 8 and 9. The main result is Theorem 8 of \cite{movbook}, where it is showed that, in holomorphic limit, the functions $\mathcal F^{(g)}$ are polynomials in the generators $t_i$ defined from the moduli space $\mathsf T$ and that the anomaly equation~\eqref{eq: hol-anomaly} can be written in terms of seven unique vector fields $R_i$, which correspond to the Ramanujan vector field and to the generators of the Lie algebra of the closed version of the algebraic group $\mathsf{G}$ in~\eqref{coordinatesG}, i.e., without the extra coordinates $h_i$. See~\cite[3.10]{movbook}. 

\begin{proposition}[cf. {\cite[Thm. 8]{movbook}}] \label{prop:anomaly-vec}
    The anomaly equation~\eqref{eq: hol-anomaly} corresponds to the following equations in the moduli space $T$ of enhanced mirror quintics:
    \[
\begin{aligned}
 R_i\, F^g &= 0, \quad i=1,3,\\
 R_2\, F^g &= (2g-2)\, F^g,\\
 R_4\, F^g &= \tfrac{1}{2}\!\left( R_0^{2}\, F^{g-1}
 + \sum_{r=1}^{g-1} R_0 F^r\, R_0 F^{g-r}\right).
 \end{aligned}
 \]
 where $R_i$ are the unique vector fields for which the Gauss-Manin connection satisfies $\nabla_{R_i}(\omega) = A_{R_i}\omega = \mathfrak{g}_i$, where $\mathfrak{g}_i$ are generators of the Lie algebra of the group $\mathsf{G}$ which acts by changing the coordinates on the moduli space $T$, for $i=1,..,6$ and $R_0$ is the Ramanujan vector field associated to the Yukawa coupling (the closed version of Theorem~\ref{Ramanujan}).

\end{proposition}

In the propostion above, $F^g$ denotes the functions on the moduli space $\mathsf T$ that correspond to the $\mathcal F^{(g)}$. The proof of this proposition is purely computational, coming from the expressions of the propagators in terms of the $t_i$. However, it gives a description of the anomaly equation in the holomorphic limit in terms of the Gauss-Manin connection. 

\subsubsection{Real/Open BCOV Theory}

In this section we explain how to extend the results explained in the last section to the real Gromov-Witten invariants of higher genus. 

We start by considering the extended holomorphic anomaly equation as presented in~\cite{evidence-walcher}. We consider the functions $\mathcal G^{(\chi)}$, which, in the holomorphic limit, correspond conjecturally to the generating function of real Gromov-Witten invariants of Euler characteristic $\chi$, i.e., to virtual counts of J-holomorphic maps with boundary on the real Lagrangian of the quintic with source of Euler characteristic $\chi$. The equation reads
\begin{equation}\label{eq:ext-hol-anomaly}
\bar\partial_z\mathcal G^{(\chi)} = \frac12\sum_{\chi_1+\chi_2 =\chi-2}C^{zz}_{\bar z}\mathcal G^{(\chi_1)}_1\mathcal G^{(\chi_2)}_1+\frac12 C^{zz}_{\bar z}\mathcal G^{(\chi-2)}_2 - \Delta^{z}_{\bar z}\mathcal G_1^{(\chi-1)},
\end{equation}
where the subindices are applications of the covariant derivative as in~\ref{eq: hol-anomaly}, $\Delta^{z}_{\bar z}=\overline{\Delta_{zz}}G^{z\bar z}e^{K}$ and $\Delta_{zz}$ is the disk potential, that in the holomorphic limit correspond to the expansion $\mathsf F$ in Theorem \ref{Ramanujan}.

Notice that it is not enough to consider orientable Riemann surfaces as was done in the paper~\cite{extended-walcher}. Although this lead to interesting functions $\mathcal F^{(g,h)}$ that satisfy the same equation~\ref{eq:ext-hol-anomaly} they do not correspond to counts of real curves. The rigorous construction of real Gromov-Witten invariants in~\cite{georgieva_real_2018} shows the need for considering nonorientable curves. This also implies that a integral BPS expansion is only expected for $\mathcal G$. 

A Feynmann diagram solution for equation~\ref{eq:ext-hol-anomaly} is also possible in terms of the same propagators as in the closed case and two extra functions called terminators. This was only written down explicitly for the oriented case (see~\cite{extended-walcher} and~\cite{konishi_solutions_2007}), but the computation carries over in the real case by changing the initial conditions. 

In the context of the Gauss-Manin connection in disguise proposed in this paper, we prove the following:

\begin{proposition}
    In the coordinates and notation of~\cite{alim-lange}, we have the following polynomials
    \begin{align}
        z^3P^{-1}\mathcal G^{(-2)}&=5,\\
        z^2P^{-\frac{1}2}\mathcal G^{(-1)}&=-5\mathcal E^z,\\
        z\mathcal G^{(0)}&=-\frac{55}{48} + \frac54(\mathcal E^z)^2 + \frac{5 P}{48} + \frac{25}6{\theta K} + \frac{T^{zz}}2, 
    \end{align}
    each expressed in the propagators $T^{zz}$ (degree $1$),$T^z$ (degree $2$) and $T$ (degree $3$), the terminators $\mathcal E^z$ (degree $\frac 12$) and $\mathcal E$ (degree $\frac 32$), the derivative of the Kähler parameter $\theta K$ (degree 1) and $p$ (degree $\frac 12$), where $p^2 = P-1$ and $P = \frac{1}{1-5^5z}$.

    In general, $z^n P^{\frac{\chi}2}\mathcal G^{(\chi)}_n$ is a polynomial of degree $\frac{3\chi}2+n$ in the variables above.
    \end{proposition}
\begin{proof}
    The first three equations are just a direct computation from the equations in~\cite{evidence-walcher} and from the closed potentials in~\cite{huang_topological_2009}. The second part is a simple application of the anomaly equation. Using that $\bar\partial T^{zz}=\frac{5P}{z^2}C^{zz}_{\bar z}$, $\bar\partial T^{z}=T^{zz}\cdot zG^{z\bar z}$, $\bar\partial T=T^{z}\cdot zG^{z\bar z}$, $\bar\partial \mathcal E^{z}= \frac{P^{\frac12}}z\Delta_{\bar z}^z$, $\bar\partial \mathcal E= \mathcal E^{z} zG^{z\bar z}$ and $\bar\partial \theta K = zG^{z\bar z}$ and collecting the terms $C^{zz}_{\bar z}$, $\Delta^{z}_{\bar z}$ and $G^{z\bar z}$, we get:
    \begin{align}
        \frac{5P}{z^2}\frac{\partial\mathcal G^{(\chi)}}{\partial T^{zz}}=\frac12\left(\sum_{\chi_1+\chi_2 =\chi-2}\mathcal G^{(\chi_1)}_1 \mathcal G^{(\chi_2)}_1+\mathcal G^{(\chi-2)}_2\right)\\
        \frac{P^{\frac12}}z\frac{\partial\mathcal G^{(\chi)}}{\partial \mathcal E^z} =-\mathcal G_1^{(\chi-1)}\\
       \frac{\partial\mathcal G^{(\chi)}}{\partial \theta K}+T^{zz}\frac{\partial\mathcal G^{(\chi)}}{\partial T^z}+ T^{z}\frac{\partial\mathcal G^{(\chi)}}{\partial T}+\mathcal E^z\frac{\partial\mathcal G^{(\chi)}}{\partial \mathcal E}=0
    \end{align}

    After multiplying the first equation by $z^2P^{\frac{\chi-2} 2}$, the second, by $zP^{\frac{\chi-1}2}$, and the third, by $P^{\frac{\chi}2}$, we get:
    \begin{align}
        5\frac{\partial P^{\frac{\chi} 2}\mathcal G^{(\chi)}}{\partial T^{zz}}=\frac12\left(\sum_{\chi_1+\chi_2 =\chi-2}zP^{\frac{\chi_1}2}\mathcal G^{(\chi_1)}_1 z P^{\frac{\chi_2}2}\mathcal G^{(\chi_2)}_1+z^2P^{\frac{\chi-2}2}\mathcal G^{(\chi-2)}_2\right)\\
        \frac{\partial P^{\frac{\chi}2}\mathcal G^{(\chi)}}{\partial \mathcal E^z} =-zP^{\frac{\chi-1}2}\mathcal G_1^{(\chi-1)}\\
       \frac{\partial P^{\frac{\chi}2}\mathcal G^{(\chi)}}{\partial \theta K}+ T^{zz}\frac{\partial P^{\frac{\chi}2}\mathcal G^{(\chi)}}{\partial T^z}+ T^z\frac{\partial P^{\frac{\chi}2}\mathcal G^{(\chi)}}{\partial T}+\mathcal E^z \frac{\partial P^{\frac{\chi}2}\mathcal G^{(\chi)}}{\partial \mathcal E}=0
    \end{align}
    
    Using induction, we conclude that the right-hand sides of the first two equations are polynomials in the variables above. This implies that, after integrating, $P^{\frac\chi2}\mathcal G^{(\chi)}$ is a polynomial up to holomorphic part.
    
    Now, from the asymptotic behavior, the holomorphic ambiguity has the same form as in~\cite[Sec. 2.4]{konishi_solutions_2007}, 
    \begin{align}
     f^{(\chi)} = \frac{\sum_{i=0,\, i\, {\rm odd}}^{3\chi} a_i z^{\frac{i}2}}{(1-5^5z)^{\chi}},\, {\rm for}\, \chi\, {\rm odd}\\ 
     f^{(\chi)} = \frac{\sum_{i=0,\, i\, {\rm even}}^{3\chi} a_i z^{\frac{i}2}}{(1-5^5z)^{\chi}},\, {\rm for}\, \chi\, {\rm even}\\
    \end{align}
    which, after multiplication by $P^{\frac{\chi}2}$, can be expressed in terms of $P=p^2+1$ and $\sqrt{P}\sqrt{5^5z}=\sqrt{P-1} = p$.
\end{proof}
One can notice that the variables $s_7$ and $s_8$ are directly related to the terminators $\mathcal E^z$ and $\mathcal E$ (see~\ref{Ramanujan} and the definition of the variables in \cite{alim-lange}). We therefore expect to be able to write the functions $\mathcal G$ as functions on the moduli space $S$ and prove a result analogous to~\ref{prop:anomaly-vec}. We, however, leave this as topic for future work.

\begin{conjecture}
The extended anomaly equation~\ref{eq:ext-hol-anomaly} corresponds to vector fields on the moduli space $S$ that form the Lie algebra of the group $\mathsf G$ of base change defined in \ref{coordinatesG}.
\end{conjecture}

\subsection{Moving families}

In conclusion, we discuss another possible extension of our work, namely, considering moving families instead of a fixed pair of conics. It is natural to consider a family of divisors in the mirror quintic and see if we get a generalization of Jacobi forms as in the elliptic curve case~\cite{MVC}. A nice starting point would be the paper~\cite{jockers-soroush}, in which the authors consider families of divisors on the mirror quintic from a Physical point of view. 

There, they also consider mixed Hodge structures, but on the relative cohomology $H^3(X_{\psi},V_{\phi})$, where $X$ is a mirror quintic~\ref{mirror-quintic} and $V$ is the family of divisors given by the equation $x_4^4-\phi x_0x_1x_2x_3=0$. The cohomology group in this case is 7-dimensional: the holomorphic 3-form and its derivatives that form a basis for $H^3(X)$ (see~\ref{proof1}) and the derivative with respect to $\phi$, and the mixed derivatives.
\[
(\omega_1, \partial_\psi \omega_1, \partial_\psi^2 \omega_1, \partial_\psi^3 \omega_1, \partial_\phi \omega_1, \partial_\psi \partial_\phi \omega_1, \partial_\psi^2 \partial_\phi \omega_1)
\]

If one considers the corresponding moduli space of enhanced triples $(X_{\psi},V_{\phi},\alpha)$, where $\alpha$ is a basis of the cohomology respecting the MHS with constant intersection product, we expect to have a space of higher dimension. Considering that there is one dimension for the choice of $X$, one for $V$, and one for $\omega_1$. For the Hodge structure, we would have 2 elements of the basis in $F^2\setminus F^3$, 2 elements in $F^1\setminus F^2$, and 2 elements in $F^0\setminus F^1$. The weight filtration will come from the Hodge structure in $H^2(V)$, similar to what we saw in section \ref{MHS and GM}. The main difficulty is the computation of the intersection product that will give further relations between the variables. This involves understanding the coupling
$$
\int \omega_1\wedge\partial_{\phi}\partial^2_{\psi}\omega_1
$$
and its derivatives. 

This should be possible after a careful analysis from the operators $\mathcal L_i$, $i=1,2,3$ in section 5.2 of \cite{jockers-soroush}. We conjecture that such coupling should be related to the inverse of the discriminant $D_2=\phi(\phi-5\psi)^4-256$ (see equation (5.13) in~\cite{jockers-soroush}), in a way similar to how the inverse of $D_1 = 1-\psi^5$ is related to the Yukawa coupling for the closed situation.

We leave the details of this extension for a future paper.

\bigskip

{\bf Acknowledgments.} I would like to thank my advisor Hossein Movasati for helping me through the development of this work and for presenting this problem to me. I also would like to thank Johannes Walcher, Martin Vogrin, Hans Jockers, Lukas Hahn and Sebastian Nils for valuable discussions and comments on early versions of this work. I also would like to thank IMPA for the infrastructure and research ambient and CNPq for financial support via grant 141069/2020-1. Finally, I would like to thank the Institute of Mathematics of the Heidelberg University and the Mainz Institute for Theoretical Physics for the great reception they provided during my visit. 

Section 5 was added almost three years after the paper first appeared on the ArXiv. I would like to thank the anonymous referees for helpful suggestions that improved the paper. I would also like to thank P. Georgieva and J. Walcher for helpful discussions regarding the last section. Finally, I acknowledge the support of the European Research Council through the grant ROGW-864919 for this last part of the work.

{\bf Data Availability}. The datasets (algorithms for computations and its results) generated during and/or analysed during the current study are available from the corresponding author on reasonable request. All the computations were done using the software MAGMA.

{\bf Conflict of Interest.} The author have no competing interests to declare that are relevant to the content of this article.

\bibliography{ArtigoGMCDOpen.bib}

\end{document}